\newtheorem{theorem}{Theorem}[section]
\newtheorem{lemma}[theorem]{Lemma}
\newtheorem{proposition}[theorem]{Proposition}
\theoremstyle{definition}
\newtheorem{example}[theorem]{Example}
\theoremstyle{remark}
\numberwithin{equation}{section}
\begin{document}
\title[EQUATION
WITH PRODUCT OF ITERATES]{DIFFERENTIABLE SOLUTIONS OF AN  EQUATION \\
	\vspace{0.05cm} 
	WITH PRODUCT OF ITERATES}

\author{CHAITANYA GOPALAKRISHNA}
\address{ Statistics and Mathematics Unit, Indian Statistical Institute,  R.V. College Post, Bangalore-560059, India}
\email{cberbalaje@gmail.com, chaitanya\_vs@isibang.ac.in}
\thanks{The author is supported by Indian Statistical Institute,
	Bangalore in the form of a Visiting Scientist position through the J. C. Bose Fellowship
	of Prof. B. V. Rajarama Bhat.
}

\subjclass[2000]{Primary 39B12; Secondary 47J05.}


\keywords{Functional equation, iteration, nonlinear combination, contraction principle.}

\begin{abstract}
In the previous work \cite{CG-MV-SW-WZ}, we considered continuous solutions of an iterative equation involving the multiplication of iterates. In this paper, we continue to investigate this equation for differentiable solutions. Similar to continuous solutions until \cite{CG-MV-SW-WZ}, there is no obtained result on differentiable solutions of such an equation on non-compact intervals of  $\mathbb{R}$. Although our strategy here is to use conjugation to reduce the equation to the well-known polynomial-like iterative equation as in \cite{CG-MV-SW-WZ}, all known results on differentiable solutions of the latter are given on compact intervals. We re-explore polynomial-like iterative equation on the whole of R and prove the existence and uniqueness of differentiable solutions of our equation on $\mathbb{R}_+$ and $\mathbb{R}_-$.

\end{abstract}

\maketitle

\section{Introduction}
The $n$-th order iterates of a map $f: X\to X$ on a nonempty set $X$, denoted by $f^n$,
are defined recursively by $f^0={\rm id}$, the identity map, and $f^{n+1}=f\circ f^n$.
The iteration operation, being an essential operation having applications to numerical computations and computer loop programs, is being investigated for its many interesting and complicated properties and, in particular
a lot of attention (see \cite{Baron-Jarczyk,Kuczma} for example) is paid to the so-called 
{\it iterative equations}, which have the general form
\begin{eqnarray}\label{phi}
\Phi(f(x), f^2(x),..., f^n(x))=F(x),
\end{eqnarray}
where $F$ and $\Phi$ are given, and $f$ is unknown.
Some special cases of this equation,
for example,
iterative root problem (\cite{Kuczma1968,Targonski}), which is a special case of the invariant curve problem \cite{Kuczma}, and dynamics of a quadratic map (\cite{greenfield})
are interesting topics in dynamical systems.

Although there are plentiful  results (see  \cite{Mu-Su, Si,Wang-Si} for example) on the solutions of \eqref{phi} when $\Phi$ is a Lipschitzian,
 the basic form
\begin{eqnarray}\label{(*)}
\lambda_1f(x)+\lambda_2f^2(x)+\cdots + \lambda_n f^n(x)= F(x)
\end{eqnarray}
with $\Phi$ in a linear combination, called the {\it polynomial-like iterative equation}, is still being considered
for deeper investigation.
Continuous solutions, differentiable solutions, convex solutions and decreasing solutions, and equivariant solutions
of \eqref{(*)} are discussed in \cite{Xu-Zhang,Bing-Weinian,zhang1990,  Zhang-Edinb}, respectively.
It is also interesting to discuss $\Phi$ of nonlinear combination.
In 2007 Zdun and Zhang \cite{Zdun-Zhang} discussed \eqref{phi}
for continuous solutions on the compact space $S^1$, the unit circle in $\mathbb{C}$, and recently Gopalakrishna et al. \cite{CG-MV-SW-WZ} 
investigated \eqref{phi}
in the special case
$\Phi(u_1, u_2, \ldots, u_n)=\prod_{k=1}^{n}u_k^{\lambda_k}$,
i.e., an iterative equation involving product of iterates
\begin{eqnarray}
\begin{array}{ll}
(g(x))^{\lambda_1} (g^{2}(x))^{\lambda_2}  \cdots (g^n(x))^{\lambda_n} =G(x),
\end{array}
\label{1}
\end{eqnarray}
 for continuous solutions
on the non-compact spaces $\mathbb{R}_+:=(0, \infty)$ and $\mathbb{R}_-:=(-\infty,0)$.

In this paper,  we continue to inverstige \eqref{1} considering its differentiable solutions.
Unlike those \cite{Mu-Su,Wang-Si,Si-Zhang,zhang1989,zhang1990} on compact intervals, 
our work to (\ref{1}) is focused on investigating \eqref{(*)} on the whole $\mathbb{R}$ as done in \cite{CG-MV-SW-WZ}.
Our approach is to restrict the discussion of \eqref{1} on $\mathbb{R}_+$ and use an exponential function
to reduce in conjugation to the well-known form of polynomial-like iterative equation (\ref{(*)}) on the whole $\mathbb{R}$. 
Note that all found results on \eqref{(*)}
are given on a compact interval, none of which
are applicable to our case.
In this paper, using Banach's contraction principle, we generally discuss \eqref{(*)} on the whole $\mathbb{R}$
and use obtained result to give   sufficient conditions for
existence and uniqueness of differentiable solutions for \eqref{1} on $\mathbb{R}_+$ and $\mathbb{R}_-$. The paper is organized as follows. In Section \ref{Sec2} we give preliminaries. In Section \ref{Sec3} we discuss the existence and uniqueness of differentiable solutions of \eqref{1}. Finally, in Section \ref{Sec4} we illustrate our result with an example and indicate some problems for future discussion. 

\section{Preliminaries}\label{Sec2}
Let
$\mathcal{C}_b(\mathbb{R}_+)$ (resp. $\mathcal{C}_b(\mathbb{R})$) denote the set of all bounded continuous self-maps of $\mathbb{R}_+$ (resp. $\mathbb{R}$), $\mathcal{C}^1(\mathbb{R}_+)$ (resp. $\mathcal{C}^1(\mathbb{R})$) the set of all continuously differentiable maps in $\mathcal{C}_b(\mathbb{R}_+)$ (resp. $\mathcal{C}_b(\mathbb{R})$),  and   $\mathcal{C}_b^1(\mathbb{R})$ the set of all  maps in  $\mathcal{C}^1(\mathbb{R})$ with bounded derivatives.
Then $\mathcal{C}_b(\mathbb{R})$
is a Banach space in the uniform norm 
$\|f\|_\infty:=\sup \{|f(x)|: x\in \mathbb{R}\},$ and $\mathcal{C}^1_b(\mathbb{R})$ is a normed linear space in the norm  $\|f \|_{\mathcal{C}^1}:=\|f\|_\infty +\|f'\|_\infty$, where $f'$ denotes the derivative of $f$. 

Consider $g$ on $\mathbb{R}_+$. We can use the exponential map $\psi(x)=e^x$ to conjugate $g$ to get a self-map $f(x):=\log g(e^x)$ on the whole $\mathbb{R}$
and reduce  (\ref{1}) to the polynomial-like one \eqref{(*)} on $\mathbb{R}$,
where $F(x):=\log G(e^x)$.

The following two propositions shows that it suffices to prove existence (resp. uniqueness) for \eqref{(*)} on the whole $\mathbb{R}$ in order to prove the existence (resp. uniqueness) of solution for \eqref{1} on $\mathbb{R}_+$ and $\mathbb{R}_-$. 

\begin{proposition}\label{P3}
	A map $g$ is a solution (resp. unique solution) of \eqref{1} in $\mathcal{X}\subseteq \mathcal{C}^1(\mathbb{R}_+)$
	if and only if $f(x):=\psi^{-1}(g(\psi(x)))$
	is a solution (resp. unique solution) of \eqref{(*)} in $\mathcal{Y}\subseteq \mathcal{C}^1(\mathbb{R})$, where $\psi(x)=e^x$ and $\mathcal{Y}=\{\psi^{-1}\circ g\circ \psi: g\in \mathcal{X}\}$.
\end{proposition}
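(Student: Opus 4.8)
The plan is to exploit the fact that $\mathcal{C}^1$-conjugation is a homomorphism for the composition of maps, so that under the logarithm it carries the multiplicative equation \eqref{1} to the additive equation \eqref{(*)}. Write $\psi(x)=e^x$, a $\mathcal{C}^\infty$ diffeomorphism of $\mathbb{R}$ onto $\mathbb{R}_+$ with $\mathcal{C}^\infty$ inverse $\psi^{-1}=\log$, and set $f:=\psi^{-1}\circ g\circ\psi$. First I would record by induction on $k$ the conjugacy identity $f^k=\psi^{-1}\circ g^k\circ\psi$: the case $k=1$ is the definition, and $f^{k+1}=f\circ f^k=(\psi^{-1}\circ g\circ\psi)\circ(\psi^{-1}\circ g^k\circ\psi)=\psi^{-1}\circ g^{k+1}\circ\psi$ after the middle $\psi\circ\psi^{-1}$ cancels. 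Equivalently $f^k(x)=\log\bigl(g^k(e^x)\bigr)$ for every $x\in\mathbb{R}$, which is legitimate because $g$ is a self-map of $\mathbb{R}_+$, so each iterate $g^k$ takes values in $\mathbb{R}_+$ and the logarithm stays well defined.

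Next I would substitute this into \eqref{(*)}. Using $\lambda_k f^k(x)=\log\bigl((g^k(e^x))^{\lambda_k}\bigr)$ together with the additivity of $\log$,
\[
\sum_{k=1}^n \lambda_k f^k(x)=\log\prod_{k=1}^n\bigl(g^k(e^x)\bigr)^{\lambda_k},
\]
while $F(x)=\log G(e^x)$ by definition. Since $\log$ is a bijection of $\mathbb{R}_+$ onto $\mathbb{R}$, the equality $\sum_{k=1}^n\lambda_k f^k(x)=F(x)$ holds at a point $x$ if and only if $\prod_{k=1}^n (g^k(e^x))^{\lambda_k}=G(e^x)$. Finally, because $\psi(x)=e^x$ maps $\mathbb{R}$ onto all of $\mathbb{R}_+$, the assertion that $f$ satisfies \eqref{(*)} for all $x\in\mathbb{R}$ is equivalent, under the substitution $t=e^x$, to the assertion that $g$ satisfies \eqref{1} for all $t\in\mathbb{R}_+$. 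This settles the equivalence at the level of individual solutions.

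It remains to upgrade this to the stated correspondence of solution sets and to the uniqueness clause. I would observe that $g\mapsto f=\psi^{-1}\circ g\circ\psi$ is a bijection from the self-maps of $\mathbb{R}_+$ onto the self-maps of $\mathbb{R}$, with inverse $f\mapsto\psi\circ f\circ\psi^{-1}$; because $\psi$ and $\psi^{-1}$ are $\mathcal{C}^1$, this bijection preserves continuous differentiability in both directions, and by construction it carries $\mathcal{X}$ bijectively onto $\mathcal{Y}=\{\psi^{-1}\circ g\circ\psi:g\in\mathcal{X}\}$. Combined with the equation equivalence, $g$ is a solution of \eqref{1} in $\mathcal{X}$ if and only if $f$ is a solution of \eqref{(*)} in $\mathcal{Y}$. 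For uniqueness, a bijection between $\mathcal{X}$ and $\mathcal{Y}$ that matches solutions with solutions must send the unique solution, if it exists, to the unique solution: any second solution in $\mathcal{Y}$ would pull back through $\psi$ to a second solution in $\mathcal{X}$, and conversely. The only place the argument could break is the well-definedness of the reduction, since one must track positivity so that both $f^k(x)=\log g^k(e^x)$ and $F(x)=\log G(e^x)$ make sense; this is exactly what restricting to self-maps of $\mathbb{R}_+$ and to a positive-valued $G$ guarantees. Everything else is the formal bookkeeping of conjugation.
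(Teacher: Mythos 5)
Your proof is correct and follows essentially the same route as the paper: conjugate by $\psi(x)=e^x$, use $f^k=\psi^{-1}\circ g^k\circ\psi$ and the additivity of $\log$ to convert the product equation into the polynomial-like one, and transfer uniqueness via the bijection $g\mapsto\psi^{-1}\circ g\circ\psi$ between $\mathcal{X}$ and $\mathcal{Y}$. The only difference is that you spell out the induction for the iterate conjugacy and the pointwise equivalence more explicitly than the paper does, which is harmless.
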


\begin{proof}
	Let $g$ be a solution of 
	\eqref{1} in $\mathcal{X}$.  Since $\psi$ is a diffeomorphism of $\mathbb{R}$ onto $\mathbb{R}_+$, clearly $f\in \mathcal{Y} \subseteq \mathcal{C}^1(\mathbb{R})$.  Also, 
	\begin{eqnarray*}
		\sum_{k=1}^{n}\lambda_kf^k(x)=\sum_{k=1}^{n}\lambda_k\log g^k(e^x)
		= \log\left(\prod_{k=1}^{n}(g^k(x))^{\lambda_k}\right) 
		=\log (G(e^x))=F(x)
	\end{eqnarray*}
for all $x\in \mathbb{R}$,	implying that $f$ is a solution of \eqref{(*)} on $\mathbb{R}$.  The converse follows similarly. To prove the uniqueness, assume that \eqref{1} has a unique solution in $\mathcal{X}$ and suppose that $f_1, f_2\in \mathcal{Y}$ are any two solutions of \eqref{(*)}. Then, by ``if'' part of what we have proved above, there exist solutions  $g_1, g_2 \in \mathcal{X}$ of \eqref{1} such that $f_1=\psi^{-1}\circ g_1\circ \psi$ and $f_2=\psi^{-1}\circ g_2\circ \psi$. By our assumption, we have $g_1=g_2$ and therefore $f_1=f_2$. The proof of converse is similar.
\end{proof}

\begin{proposition}\label{P4}
	Let
	$\lambda_k\in \mathbb{Z}$ for $1\le k\le n$ such that $\sum_{k=1}^{n}\lambda_k$ is odd.
	Then a map $g$ is a solution (resp. unique solution) of \eqref{1} in $\mathcal{X}\subseteq \mathcal{C}^1(\mathbb{R}_-)$
	if and only if $h(x):=\psi^{-1}(g(\psi(x)))$
	is a solution (resp. unique solution) of the equation
	\begin{eqnarray}\label{15}
	\prod_{k=1}^{n}(h^{k}(x))^{\lambda_k}=H(x)
	\end{eqnarray}
	in  $\mathcal{Y}\subseteq \mathcal{C}^1(\mathbb{R}_+)$,
	where $\psi(x)=-x$, $H(x)=\psi^{-1}(G(\psi(x)))$ and $\mathcal{Y}=\{\psi^{-1}\circ g\circ \psi: g\in \mathcal{X}\}$.
\end{proposition}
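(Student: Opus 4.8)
The plan is to exploit that $\psi(x)=-x$ is an involutive diffeomorphism interchanging $\mathbb{R}_+$ and $\mathbb{R}_-$, so that $h=\psi^{-1}\circ g\circ\psi$ is a genuine $\mathcal{C}^1$ self-map of $\mathbb{R}_+$, and to push the equation through conjugation exactly as in Proposition~\ref{P3}. The one new feature compared with Proposition~\ref{P3} is that conjugation by $\psi$ no longer acts multiplicatively inside each factor; instead it introduces a sign, and the hypotheses $\lambda_k\in\mathbb{Z}$ and $\sum_{k=1}^n\lambda_k$ odd are precisely what is needed to control that sign.

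First I would record the conjugacy of iterates. Since $h=\psi^{-1}\circ g\circ\psi$ and $\psi\circ\psi^{-1}=\mathrm{id}$, a one-line induction gives $h^k=\psi^{-1}\circ g^k\circ\psi$ for every $k$, that is, $h^k(x)=-g^k(-x)$ for $x\in\mathbb{R}_+$. Because $g$ maps $\mathbb{R}_-$ into itself, $g^k(-x)<0$, so $h^k(x)>0$, confirming that $h$ is a self-map of $\mathbb{R}_+$; since $\psi$ is a $\mathcal{C}^1$-diffeomorphism we also get $h\in\mathcal{Y}\subseteq\mathcal{C}^1(\mathbb{R}_+)$.

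The core step is the sign computation. Let $g$ solve \eqref{1} on $\mathbb{R}_-$ and fix $x\in\mathbb{R}_+$, so that $-x\in\mathbb{R}_-$. Writing $g^k(-x)=-h^k(x)$ with $h^k(x)>0$, and using $\lambda_k\in\mathbb{Z}$ to make a negative base raised to $\lambda_k$ meaningful, each factor satisfies $(g^k(-x))^{\lambda_k}=(-1)^{\lambda_k}(h^k(x))^{\lambda_k}$. Taking the product over $k$ and inserting into \eqref{1} evaluated at $-x$ yields
\[
(-1)^{\sum_{k=1}^n\lambda_k}\prod_{k=1}^n\bigl(h^k(x)\bigr)^{\lambda_k}=G(-x).
\]
Here the hypothesis that $\sum_{k=1}^n\lambda_k$ is odd collapses the accumulated sign to $-1$, giving $\prod_{k=1}^n(h^k(x))^{\lambda_k}=-G(-x)=\psi^{-1}(G(\psi(x)))=H(x)$, which is exactly \eqref{15}; hence $h$ solves \eqref{15}. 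The converse is obtained by running the same identities backwards (using that $\psi$ is an involution, so $g=\psi\circ h\circ\psi$), and the uniqueness statement follows verbatim from the argument in Proposition~\ref{P3}: the map $g\mapsto\psi^{-1}\circ g\circ\psi$ is a bijection between $\mathcal{X}$ and $\mathcal{Y}$ carrying solutions of \eqref{1} to solutions of \eqref{15}, so a unique solution on one side forces a unique solution on the other.

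I expect the only genuine obstacle to be the bookkeeping of signs and exponents: one must verify that each $(g^k(-x))^{\lambda_k}$ is well defined for a negative base, which is why integrality of $\lambda_k$ is imposed, and that the product of the individual signs is exactly $-1$ rather than $+1$, which is why oddness, and not merely integrality, of $\sum_{k=1}^n\lambda_k$ is required. Everything else is a transcription of Proposition~\ref{P3}.
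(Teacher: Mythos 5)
Your proposal is correct and follows essentially the same route as the paper: conjugate the iterates via $h^k(x)=-g^k(-x)$, pull the sign $(-1)^{\lambda_k}$ out of each factor using integrality of $\lambda_k$, and use oddness of $\sum_{k=1}^n\lambda_k$ to collapse the accumulated sign to $-1$ so that the product equals $-G(-x)=H(x)$; the converse and uniqueness are handled exactly as in Proposition~\ref{P3}. The only cosmetic difference is that you start from the $g$-equation at $-x$ and solve for the $h$-product, while the paper starts from the $h$-product and rewrites it, but the computation is identical.
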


\begin{proof}
	Let $g$ be a solution of \eqref{1} in $\mathcal{X}$.
	Since $\psi$ is a diffeomorphism of $\mathbb{R}_+$ onto $\mathbb{R}_-$, clearly $h\in \mathcal{Y}\subseteq \mathcal{C}^1(\mathbb{R}_+)$.
	Also, for each $x\in \mathbb{R}_+$ and $k\in \{1,2, \ldots, n\}$, we have $H(x)=-G(-x)$ and $h^k(x)=-g^k(-x)$. Therefore
	\begin{eqnarray*}
		\prod_{k=1}^{n}(h^{k}(x))^{\lambda_k}
		= \prod_{k=1}^{n}(-g^k(-x))^{\lambda_k}
		&=&(-1)^{\sum_{k=1}^{n}\lambda_k}\prod_{k=1}^{n} (g^k(-x))^{\lambda_k}\\
		&=&-\prod_{k=1}^{n} (g^k(-x))^{\lambda_k}
		=-G(-x)=H(x)
	\end{eqnarray*}
for each $x\in \mathbb{R}_+$, because
	$\sum_{k=1}^{n}\lambda_k$ is odd,
	implying that $h$ is a solution of \eqref{15} on $\mathbb{R}_+$.
	The converse follows similarly. Further, the proof of uniqueness is similar to that of Proposition \ref{P3}.
\end{proof}

Let $J=[c,d]$ and $I=[a,b]$ be compact intervals in $\mathbb{R}_+$ and $\mathbb{R}$, respectively with non-empty interiors. 
Let
$\mathcal{C}(J)$ (resp. $\mathcal{C}^1(J)$) be the set of all continuous (resp. continuously differentiable) self-maps of $J$. Similarly we define $\mathcal{C}(I)$ and $\mathcal{C}^1(I)$.
For each $g\in \mathcal{C}_b(\mathbb{R}_+)$ (resp. $f\in \mathcal{C}_b(\mathbb{R})$) and subinterval $J'$ of $\mathbb{R}_+$ (resp. $I'$ of $\mathbb{R}$),
let
$\|g\|_\infty^{J'}:=\sup \{|g(x)|: x\in J'\}$  (resp. $\|f\|_\infty^{I'}:=\sup \{|f(x)|: x\in I'\}$). Similarly, for $g\in \mathcal{C}^1(\mathbb{R}_+)$ (resp. $f\in \mathcal{C}_b^1(\mathbb{R})$), let $\|g\|_{\mathcal{C}^1}^{J'}:=\|g\|_\infty^{J'}+\|g'\|_\infty^{J'}$  (resp. $\|f\|_{\mathcal{C}^1}^{I'}:=\|f\|_\infty^{I'}+\|f'\|_\infty^{I'}$).
Also, for any map $f$ and point $x$, let $\mathcal{R}(f)$
denote the range of $f$ and $f'(x)$ (or $(f(x))'$) the derivative $\frac{df(x)}{dx}$.  
For $M, M^*, \delta\ge 0$, let
\begin{align*}
\mathcal{G}_{J}(\delta,M, M^*):=&\{g\in \mathcal{C}^1(\mathbb{R}_+): \mathcal{R}(g)=J, g(c)=c, g(d)=d,~\text{and}~\eqref{9}, \eqref{10}, \eqref{12}~\text{hold} \},\\
\mathcal{F}_{I}(\delta,M, M^*):=&\{f\in \mathcal{C}_b^1(\mathbb{R}): \mathcal{R}(f)=I, f(a)=a, f(b)=b,~ \text{and}~\eqref{2}, \eqref{4}, \eqref{06}~\text{hold} \},\\
\mathcal{B}_{J}(\delta,M,M^*):=&\{g\in \mathcal{C}^1(\mathbb{R}_+): \mathcal{R}(g)=J, g(c)=c, g(d)=d,~\text{and}~\eqref{11}, \eqref{12}~\text{hold} \}, \\
\mathcal{A}_{I}(\delta,M, M^*):=&\{f\in \mathcal{C}^1_b(\mathbb{R}): \mathcal{R}(f)=I, f(a)=a, f(b)=b,~\text{and}~\eqref{05}, \eqref{06}~\text{hold}\},
\end{align*}
where 
\begin{equation}\label{9}
\delta \le \frac{xg'(x)}{g(x)}\le M,\quad \forall x\in J,
\end{equation}
\begin{equation}\label{10}
\left|\frac{xg'(x)}{g(x)}\right|\le M,\quad \forall x\in \mathbb{R}_+\setminus J,
\end{equation}
\begin{equation}\label{11}
0 \le \frac{xg'(x)}{g(x)}\le M,\quad \forall x\in  J,
\end{equation}
\begin{equation}\label{12}
\left|\frac{xg'(x)}{g(x)}-\frac{yg'(y)}{g(y)}\right|\le M^*\left|\log \left(\frac{x}{y}\right) \right|, \quad \forall x, y \in J,
\end{equation}
\begin{equation}\label{2}
\delta \le f'(x)\le M,~\quad \forall x\in I,
\end{equation}
\begin{equation}\label{4}
|f'(x)|\le M,~\quad \forall x\in \mathbb{R}\setminus I,
\end{equation}
\begin{equation}\label{05}
0 \le f'(x)\le M,~\quad \forall x\in I,
\end{equation}
\begin{equation}\label{06}
|f'(x)-f'(y)|\le M^*|x-y|,~\quad \forall x, y\in I.
\end{equation}
Then it can be seen that 
$\mathcal{G}_{J}(\delta,M, M^*) \subseteq \mathcal{G}_{J}(\delta_1,M_1, M_1^*)$, $ 	\mathcal{F}_{I}(\delta,M, M^*)\subseteq \mathcal{F}_{I}(\delta_1,M_1, M_1^*)$, 
$	\mathcal{B}_{J}(\delta,M, M^*)\subseteq 	\mathcal{B}_{J}(\delta_1,M_1, M_1^*)$  and	$\mathcal{A}_{I}(\delta,M, M^*) \subseteq 	\mathcal{A}_{I}(\delta_1,M_1, M_1^*)$ whenever $0\le \delta_1\le \delta$, $0\le M\le  M_1$ and $0\le M^*\le M_1^*$. 

\begin{proposition}\label{P1}
	The following assertions are true for $M,M^*, \delta\ge 0$.  
	\begin{description}
		\item[(i)] $g\in 	\mathcal{G}_{J}(\delta,M, M^*)$ if and only if $f=\psi^{-1}\circ g\circ\psi  \in 	\mathcal{F}_{I}(\delta,M, M^*)$, where $\psi (x)=e^x$ and $I=\log J:=\{\log x:x\in J\}$. \label{P1i}
		\item[(ii)]
		If  $f \in 	\mathcal{A}_{I}(\delta,M, M^*)$, then  $g=\psi\circ f\circ \psi^{-1}\in 	\mathcal{B}_{J}(\delta,M, M^*)$, where $\psi (x)=e^x$ and $J=e^I:=\{e^x:x\in I\}$. \label{P1ii}
	\end{description}
\end{proposition}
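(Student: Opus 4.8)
The plan is to reduce both assertions to a single computation: the effect of exponential conjugation on the logarithmic derivative. Writing $\psi(x)=e^x$ and $\psi^{-1}(x)=\log x$, I would first establish the central identity that for $f=\psi^{-1}\circ g\circ\psi$, i.e. $f(x)=\log g(e^x)$, the chain rule gives
\begin{equation*}
f'(x) = \frac{e^x g'(e^x)}{g(e^x)},
\end{equation*}
so that the quantity $\frac{u g'(u)}{g(u)}$ controlled in \eqref{9}--\eqref{12} is exactly $f'$ evaluated at $x=\log u$. Dually, for $g=\psi\circ f\circ\psi^{-1}$, i.e. $g(u)=e^{f(\log u)}$, one computes $\frac{u g'(u)}{g(u)}=f'(\log u)$. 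The second ingredient is the elementary identity $\log(u/v)=x-y$ whenever $u=e^x$ and $v=e^y$, which converts the ``logarithmic distance'' in \eqref{12} into the ordinary distance in \eqref{06}.

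With these in hand, part (i) becomes a term-by-term translation. Since $\psi$ is a $C^\infty$ diffeomorphism of $\mathbb{R}$ onto $\mathbb{R}_+$ and $g$ is a positive self-map of $\mathbb{R}_+$, the composite $f=\log\circ g\circ\exp$ is $C^1$ precisely when $g$ is, and the relation $\mathcal{R}(f)=\psi^{-1}(\mathcal{R}(g))$ shows $\mathcal{R}(g)=J$ iff $\mathcal{R}(f)=\log J=I$; the latter forces $f$ bounded, and the derivative bound in \eqref{2}/\eqref{4} forces $f'$ bounded, so $f\in\mathcal{C}^1_b(\mathbb{R})$. The endpoint conditions $g(c)=c,\ g(d)=d$ are equivalent to $f(a)=a,\ f(b)=b$ because $a=\log c$ and $b=\log d$. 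Finally, substituting $f'(x)=\frac{e^x g'(e^x)}{g(e^x)}$ turns \eqref{9} into \eqref{2} on $I$, turns \eqref{10} into \eqref{4} on $\mathbb{R}\setminus I$ (using that $\psi$ maps $\mathbb{R}\setminus I$ bijectively onto $\mathbb{R}_+\setminus J$), and---together with $\log(u/v)=x-y$---turns \eqref{12} into \eqref{06}. Reading each equivalence backwards, via the mutually inverse conjugations $\psi^{-1}\circ g\circ\psi$ and $\psi\circ f\circ\psi^{-1}$, gives the converse, so (i) is a genuine ``if and only if''.

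For part (ii) I would run the same translation in the single direction from $f$ to $g$. From $f\in\mathcal{A}_I$, the dual formula $\frac{u g'(u)}{g(u)}=f'(\log u)$ sends \eqref{05} to \eqref{11} and \eqref{06} to \eqref{12}, while $\mathcal{R}(g)=e^I=J$ and $g(c)=e^{f(a)}=e^a=c$, $g(d)=d$ handle the range and endpoints; the $C^1$-ness of $g$ follows from that of $f$, and boundedness of $g$ from $\mathcal{R}(g)=J$ being compact, so $g\in\mathcal{B}_J$. The point worth flagging is why (ii) is only a one-way implication while (i) is an equivalence: the complement conditions \eqref{10} and \eqref{4} appear in the definitions of $\mathcal{G}_J$ and $\mathcal{F}_I$ but not in those of $\mathcal{B}_J$ and $\mathcal{A}_I$. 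Consequently, starting from $g\in\mathcal{B}_J$ one has no bound on $f'(x)=\frac{e^x g'(e^x)}{g(e^x)}$ for $x\notin I$, so one cannot conclude $f\in\mathcal{C}^1_b(\mathbb{R})$; this is the only step that genuinely breaks, and it is what prevents upgrading (ii) to an ``if and only if''.

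Overall there is no deep obstacle here: the entire proposition is powered by the one derivative computation, and the mild care needed is in tracking the membership conditions---especially the global boundedness of $f'$---rather than in any inequality.
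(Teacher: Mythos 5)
Your proposal is correct and follows essentially the same route as the paper: the chain-rule identity $f'(x)=\frac{e^xg'(e^x)}{g(e^x)}$ (equivalently $\frac{ug'(u)}{g(u)}=f'(\log u)$) together with the substitution $x=\log u$, $y=\log v$ translates \eqref{9}--\eqref{12} into \eqref{2}--\eqref{06} term by term, with the range and endpoint conditions handled by the bijectivity of $\psi$. Your added remark on why \textbf{(ii)} cannot be upgraded to an equivalence --- namely that $\mathcal{B}_J$ imposes no control on $\frac{xg'(x)}{g(x)}$ outside $J$, so global boundedness of $f'$ fails --- is accurate and is not spelled out in the paper, but it does not change the substance of the argument.
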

\begin{proof}
	Given	$g\in 	\mathcal{G}_{J}(\delta,M, M^*)$, let $a:=\log c$ and $b:=\log d$. Then we obtain the interval $I=[a,b]$ with $a<b$, which satisfies $I=\log J$.
	Clearly, $f:=\psi^{-1}\circ g\circ\psi  \in \mathcal{C}^1(\mathbb{R})$. Also, we have $f(a)=\log g(e^a)=\log g(c)=\log c=a$, and similarly $f(b)=b$. So, $I\subseteq \mathcal{R}(f)$. The reverse inclusion follows by the definitions of $f$ and $I$, because $\mathcal{R}(g)=J$. Therefore $\mathcal{R}(f)=I$.
	
	Now, let $x\in I$ and $y\in \mathbb{R}\setminus I$.  Then there exist $u \in J$ and $v\in \mathbb{R}_+\setminus J$ such that $x=\log u$ and $y=\log v$. So, by \eqref{9} and \eqref{10}, we have 
	\begin{eqnarray*}
		\delta \le \frac{ug'(u)}{g(u)}\le M\quad\text{and}\quad \left| \frac{vg'(v)}{g(v)}\right|\le M,
	\end{eqnarray*}
	implying that 
	\begin{eqnarray*}
		\delta \le \frac{e^xg'(e^x)}{g(e^x)}\le M\quad\text{and}\quad \left| \frac{e^yg'(e^y)}{g(e^y)}\right|\le M,
	\end{eqnarray*}
	respectively. i.e., $\delta \le f'(x)\le M$ and $|f'(y)|\le M$, proving that $f$ satisfy \eqref{2} and \eqref{4}, respectively. 
	
	Next, let $x, y\in I$. Then there exist $u,v \in J$ such that $x=\log u$ and $y=\log v$. So, by \eqref{12}, we have
	\begin{eqnarray*}
		\left|\frac{ug'(u)}{g(u)}-\frac{vg'(v)}{g(v)}\right|\le M^*\left|\log \left(\frac{u}{v}\right) \right|, 
	\end{eqnarray*}
	implying that
	\begin{eqnarray*}
		\left|\frac{e^xg'(e^x)}{g(e^x)}-\frac{e^yg'(e^y)}{g(e^y)}\right|\le M^*\left|\log \left(\frac{e^x}{e^y}\right) \right|. 
	\end{eqnarray*}
	i.e., $|f'(x)-f'(y)|\le M^*|x-y|$, proving that $f$ satisfies \eqref{06}. Therefore $f \in 	\mathcal{F}_{I}(\delta,M, M^*)$. The proof of converse and that of result {\bf (ii)} are similar.
\end{proof}

\begin{proposition}\label{P2}
	If $M<1$ or $\delta >1$, then $\mathcal{G}_{J}(\delta,M, M^*)=\emptyset$.
	If $M=1$ or $\delta=1$, then $\mathcal{G}_{J}(\delta,M, M^*)=\{g\in \mathcal{C}^1(\mathbb{R}_+): g|_{J}={\rm id}\}$.
\end{proposition}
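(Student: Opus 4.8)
My plan is to use the fixed-point conditions $g(c)=c$ and $g(d)=d$ to pin down the quantity $xg'(x)/g(x)$ appearing in \eqref{9}. Since every $g\in\mathcal{C}^1(\mathbb{R}_+)$ is $\mathbb{R}_+$-valued and $0<c<d$, the function
\[
\Psi(x):=\log g(x)-\log x,\qquad x\in J=[c,d],
\]
is well defined and of class $\mathcal{C}^1$, with $\Psi(c)=\Psi(d)=0$ and
\[
\Psi'(x)=\frac{g'(x)}{g(x)}-\frac1x=\frac1x\Bigl(\frac{xg'(x)}{g(x)}-1\Bigr).
\]
In particular the sign of $\Psi'(x)$ records exactly whether the quantity bounded in \eqref{9} lies above or below $1$, which is the single fact driving the whole argument.

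For the first assertion I would apply Rolle's theorem: since $\Psi(c)=\Psi(d)$, there is $x_0\in(c,d)\subseteq J$ with $\Psi'(x_0)=0$, i.e. $x_0g'(x_0)/g(x_0)=1$. Inserting this into \eqref{9} gives $\delta\le 1\le M$, which is impossible when $M<1$ and impossible when $\delta>1$; hence $\mathcal{G}_{J}(\delta,M,M^*)=\emptyset$ in both ranges.

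For the second assertion, suppose first $M=1$. Then \eqref{9} forces $xg'(x)/g(x)\le 1$, so $\Psi'\le 0$ and $\Psi$ is non-increasing on $[c,d]$; combined with $\Psi(c)=\Psi(d)=0$ this sandwiches $\Psi\equiv 0$, that is $g|_J=\mathrm{id}$. The case $\delta=1$ is symmetric, giving $\Psi'\ge 0$, $\Psi$ non-decreasing, and again $\Psi\equiv 0$. This establishes the inclusion $\mathcal{G}_{J}(\delta,M,M^*)\subseteq\{g\in\mathcal{C}^1(\mathbb{R}_+):g|_J=\mathrm{id}\}$. For the reverse inclusion one verifies that a map with $g|_J=\mathrm{id}$ meets the defining requirements of $\mathcal{G}_J$ on $J$: there $xg'(x)/g(x)\equiv 1$, so \eqref{9} holds (which forces $\delta\le 1\le M$, in agreement with the first assertion) and \eqref{12} holds trivially since the expression is constant, while $c$ and $d$ are fixed automatically.

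The only step that calls for care is the rigidity in the critical case, where the pointwise sign condition $\Psi'\le 0$ (resp. $\Psi'\ge 0$) must be promoted to $\Psi\equiv 0$ using the boundary normalisation $\Psi(c)=\Psi(d)=0$ rather than any bound alone; everything else is a direct application of the mean value theorem. A fully equivalent route is to conjugate by $\psi(x)=e^x$ and argue through Proposition \ref{P1} on $f\in\mathcal{F}_I$, where \eqref{9} becomes the linear bound $\delta\le f'\le M$ and the relation $\int_a^b f'(x)\,dx=b-a$ makes both assertions transparent; I prefer the direct computation above, as it engages \eqref{9} and \eqref{12} without transporting the target set through the conjugacy.
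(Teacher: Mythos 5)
Your proof is correct, and it reaches the conclusion by a mildly different route than the paper. The paper first transports $g$ to $f=\psi^{-1}\circ g\circ\psi\in\mathcal{F}_{I}(\delta,M,M^*)$ via Proposition \ref{P1} and then works with the integrated inequality $f(x)-f(y)\le M(x-y)$: for $M<1$ this forces $f(x)<x$ for $x>a$, contradicting $f(b)=b$, and for $M=1$ the two endpoint normalisations squeeze $f$ onto the identity. You instead stay on $J$ and encode the same information in $\Psi(x)=\log g(x)-\log x$, whose derivative is $\frac{1}{x}\bigl(\frac{xg'(x)}{g(x)}-1\bigr)$; your use of Rolle's theorem to produce a point where $xg'(x)/g(x)=1$, hence $\delta\le 1\le M$, disposes of both emptiness cases in one stroke and is slightly cleaner than the paper's separate Lipschitz-estimate contradictions, while your monotonicity-plus-boundary argument in the critical case is the exact analogue of the paper's squeeze. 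Since your $\Psi$ is just $f-\mathrm{id}$ read through the conjugacy, the two proofs are equivalent in substance; what you gain is only the avoidance of the detour through Proposition \ref{P1}. One shared caveat: for the reverse inclusion in the second assertion you (like the paper) only verify \eqref{9} and \eqref{12} on $J$, whereas membership in $\mathcal{G}_{J}(\delta,M,M^*)$ as defined also requires $\mathcal{R}(g)=J$ and \eqref{10} on $\mathbb{R}_+\setminus J$, which an arbitrary $g\in\mathcal{C}^1(\mathbb{R}_+)$ with $g|_{J}=\mathrm{id}$ need not satisfy; this is an imprecision in the proposition's statement itself rather than a defect of your argument.
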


\begin{proof}
	Let $g\in \mathcal{G}_{J}(\delta,M, M^*)$.	Then by result {\bf (i)} of Proposition \ref{P1},  $f:=\psi^{-1}\circ g\circ\psi  \in 	\mathcal{F}_{I}(\delta,M, M^*)$, where $\psi (x)=e^x$ and $I=\log J$. So,  by using \eqref{2} we get 
	\begin{eqnarray}\label{5}
	f(x)-f(y)\le M(x-y), \quad \forall x,y \in I~\text{with}~x\ge y.
	\end{eqnarray}
	If $M<1$, then by setting $y=a$ in \eqref{5}, we have $f(x)<x$ for all $x\in I$ with $x>a$. This is a contradiction to the fact that $f(b)=b$, because $b>a$. So, $\mathcal{F}_{I}(\delta,M, M^*)=\emptyset$, and hence $\mathcal{G}_{J}(\delta,M, M^*)=\emptyset$ whenever $M<1$. A similar argument holds when $\delta>1$.
	
	If $M=1$, then from \eqref{5} we have
	\begin{eqnarray}\label{6}
	f(x)-f(y)\le x-y, \quad \forall x,y \in I~\text{with}~x\ge y.
	\end{eqnarray}
	For $x=b$, \eqref{6} implies that $f(y)\ge y$  for all $y\in I$ with $y<b$. Moreover, setting $y=a$ in \eqref{6}, we have $f(x)\le x$ for all $x\in I$ with $x> a$. Thus $f(x)=x$ for all $x\in I$, and therefore $f|_I={\rm id}$. This implies that $g|_J={\rm id}$.  The reverse inclusion is trivial. So, $\mathcal{G}_{J}(\delta,M, M^*)=\{g\in \mathcal{C}^1(\mathbb{R}_+): g|_{J}={\rm id}\}$. A similar argument holds when $\delta=1$.
\end{proof}

In view of the above proposition, we cannot seek solutions of \eqref{1} without imposing conditions on $M$ and $\delta$. So, henceforth we assume that $0<\delta\le 1\le M$ and $M^*>0 $.

\begin{proposition}\label{Fdm complete}
	The set $\mathcal{A}_{I}(\delta,M, M^*)$ is a complete metric space under the metric induced by $\|\cdot \|_{\mathcal{C}^1}$.
\end{proposition}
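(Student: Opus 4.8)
The plan is to realize $\mathcal{A}_{I}(\delta,M, M^*)$ as a closed subset of the ambient normed space $\mathcal{C}^1_b(\mathbb{R})$ and then to invoke the elementary fact that a closed subspace of a complete metric space is itself complete. Two things must therefore be arranged: first, that $(\mathcal{C}^1_b(\mathbb{R}),\|\cdot\|_{\mathcal{C}^1})$ is complete, and second, that $\mathcal{A}_{I}(\delta,M, M^*)$ is closed in it. Equivalently, one may simply start from a Cauchy sequence in $\mathcal{A}_{I}(\delta,M, M^*)$ and verify both that it converges in $\|\cdot\|_{\mathcal{C}^1}$ and that the limit lies in $\mathcal{A}_{I}(\delta,M, M^*)$; the work is the same either way.

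For the completeness of the ambient space, I would start from a Cauchy sequence $(f_n)$ in $\mathcal{C}^1_b(\mathbb{R})$. Since $\|f_n-f_m\|_\infty\le \|f_n-f_m\|_{\mathcal{C}^1}$ and $\|f_n'-f_m'\|_\infty\le \|f_n-f_m\|_{\mathcal{C}^1}$, both $(f_n)$ and $(f_n')$ are Cauchy in the uniform norm. Using that $\mathcal{C}_b(\mathbb{R})$ is a Banach space (Section~\ref{Sec2}), there are $f,h\in \mathcal{C}_b(\mathbb{R})$ with $f_n\to f$ and $f_n'\to h$ uniformly. The standard theorem on term-by-term differentiation, namely uniform convergence of the derivatives together with convergence at a single point, then gives that $f$ is differentiable with $f'=h$; hence $f\in \mathcal{C}^1_b(\mathbb{R})$ and $f_n\to f$ in $\|\cdot\|_{\mathcal{C}^1}$. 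This differentiation theorem is the genuine analytical content of the argument; everything else is limit-passing in inequalities.

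For closedness, I would take $(f_n)\subseteq \mathcal{A}_{I}(\delta,M, M^*)$ with $f_n\to f$ in $\|\cdot\|_{\mathcal{C}^1}$ and check each defining condition for $f$. Uniform convergence yields $f(a)=\lim_n f_n(a)=a$ and likewise $f(b)=b$; and since $f_n'\to f'$ uniformly, passing to the limit in the pointwise inequalities from \eqref{05} and \eqref{06} preserves $0\le f'(x)\le M$ and $|f'(x)-f'(y)|\le M^*|x-y|$ on $I$. The condition needing the most care is $\mathcal{R}(f)=I$, which I would split into two inclusions. For $\mathcal{R}(f)\subseteq I$, I would use that $a\le f_n(x)\le b$ for every $x$ (as $\mathcal{R}(f_n)=I$) and pass to the uniform limit to get $a\le f(x)\le b$ for all $x$. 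For the reverse inclusion $I\subseteq \mathcal{R}(f)$, I would appeal to continuity of $f$ and the intermediate value theorem: since $f(a)=a$ and $f(b)=b$ with $a<b$, every value of $[a,b]$ is attained. Hence $\mathcal{R}(f)=I$ and $f\in \mathcal{A}_{I}(\delta,M, M^*)$.

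The only step that is not pure routine is this range identity, and even there the obstacle is merely organizational: one must recognize that $\subseteq$ comes from preserving the bound $a\le f_n\le b$ under uniform limits, while $\supseteq$ comes from the intermediate value theorem applied to the limit of the boundary conditions, rather than from any monotonicity. Once both inclusions are in hand, $\mathcal{A}_{I}(\delta,M, M^*)$ is seen to be a closed subset of the complete space $\mathcal{C}^1_b(\mathbb{R})$, and is therefore a complete metric space under the metric induced by $\|\cdot\|_{\mathcal{C}^1}$.
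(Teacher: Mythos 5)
Your proposal is correct and follows essentially the same route as the paper: both realize $\mathcal{A}_{I}(\delta,M,M^*)$ as a closed subset of $\mathcal{C}^1_b(\mathbb{R})$ and reduce the problem to the completeness of that ambient space, obtained from uniform limits of $(f_n)$ and $(f_n')$. The only difference is one of emphasis: the paper proves the term-by-term differentiation step explicitly via the fundamental theorem of calculus and dismisses the closedness verification as ``easy to see,'' whereas you cite the standard differentiation theorem and instead spell out the closedness checks (including the range identity via the intermediate value theorem), all of which are sound.
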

\begin{proof}
	It is easy to see that $\mathcal{A}_{I}(\delta,M, M^*)$ is a closed subset of $\mathcal{C}^1_b(\mathbb{R})$. Therefore, to prove the result, it suffices to show that   $\mathcal{C}^1_b(\mathbb{R})$ is complete with respect to the metric induced by $\|\cdot \|_{\mathcal{C}^1}$. So, consider an arbitrary Cauchy sequence  $(f_k)_{k\ge 1}$  in the normed linear space $(\mathcal{C}^1_b(\mathbb{R}),\|\cdot \|_{\mathcal{C}^1})$. Then, by the definition of $\|\cdot \|_{\mathcal{C}^1}$, it follows that $(f_k)_{k\ge 1}$ and $(f_k')_{k\ge 1}$ are Cauchy sequences in  $(\mathcal{C}_b(\mathbb{R}),\|\cdot \|_\infty)$, which is a Banach space. So, there exist $f, f_0\in \mathcal{C}_b(\mathbb{R})$ such that $f_k \to f$ and $f_k'\to f_0$ uniformly on $\mathbb{R}$ as $k\to \infty$. Also, since $(f_k')_{k\ge 1}$  is a bounded sequence in $(\mathcal{C}_b(\mathbb{R}),\|\cdot \|_\infty)$, there exists $\kappa >0$ such that $\|f_k'\|_\infty <\kappa$ for all $k\in \mathbb{N}$, implying that $|f_0(x)|\le \kappa$ for all $x\in \mathbb{R}$. We now claim that $f'=f_0$ on $\mathbb{R}$.
	
	Consider an arbitrary $x\in \mathbb{R}$. Let $I_x:=[u_x,v_x]$, where $u_x, v_x\in \mathbb{R}$ are chosen such that $u_x<x<v_x$. Since $f_k\to f$ uniformly on $I_x$, by using fundamental theorem of calculus, we have 
	\begin{eqnarray*}
		\int_{u_x}^{t}f'_k(y) dy=f_k(t)-f_k(u_x)\to f(t)-f(u_x)
	\end{eqnarray*}
for all $t\in I_x$. Also, since $f'_k\to f_0$ uniformly on $I_x$, we have
	\begin{eqnarray*}
	\int_{u_x}^{t}f'_k(y) dy \to \int_{u_x}^{t}f_0(y) dy
\end{eqnarray*}
	for all $t\in I_x$.
	Therefore 
		\begin{eqnarray*}
	 \int_{u_x}^{t}f_0(y) dy=f(t)-f(u_x),
	\end{eqnarray*}
	implying by fundamental theorem of calculus that $f'(t)=f_0(t)$ for all $t\in I_x$. 	
	Thus the claim holds and it follows that $f_k\to f$ in $(\mathcal{C}^1_b(\mathbb{R}),\|\cdot \|_{\mathcal{C}^1})$. The proof is completed.
\end{proof}

 In addition to the above proposition,  we need the following seven technical lemmas to prove our main result.

\begin{lemma}{\rm (Lemma 2.1 of \cite{zhang1990})} \label{L1}
	If $f\in \mathcal{C}^1(I)$ satisfy \eqref{05} and \eqref{06}, then 
	\begin{eqnarray*}
		|(f^k(x))'-(f^k(y))'|\le M^*\left(\sum_{j=k-1}^{2k-2}M^j\right)|x-y|, \quad \forall x, y \in I~\text{and}~k\ge 1.
	\end{eqnarray*}
\end{lemma}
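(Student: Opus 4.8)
The plan is to induct on $k$, using the chain rule to relate $(f^k)'$ to $(f^{k-1})'$ and $f'$. The base case $k=1$ is immediate: since $\sum_{j=0}^{0}M^j=1$, the claimed inequality reads $|f'(x)-f'(y)|\le M^*|x-y|$, which is exactly \eqref{06}.

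Before carrying out the inductive step, I would record two elementary consequences of \eqref{05}. First, writing $f^k=f\circ f^{k-1}$ and applying the chain rule gives $(f^k)'(x)=f'(f^{k-1}(x))\,(f^{k-1})'(x)$; iterating, $(f^k)'(x)=\prod_{i=0}^{k-1}f'(f^i(x))$. Since $f$ is a self-map of $I$ we have $f^i(x)\in I$, so each factor lies in $[0,M]$ by \eqref{05}, whence $0\le (f^k)'(x)\le M^k$ on $I$. Second, by the mean value theorem this yields the Lipschitz bound $|f^{k}(x)-f^{k}(y)|\le M^{k}|x-y|$ for all $x,y\in I$, which I will use to control the inner argument of $f'$.

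For the inductive step, assume the estimate for $k-1$. Starting from $(f^k)'=f'(f^{k-1})\cdot(f^{k-1})'$, I would insert a cross term to split the difference as
\begin{align*}
(f^k)'(x)-(f^k)'(y)&=f'(f^{k-1}(x))\big[(f^{k-1})'(x)-(f^{k-1})'(y)\big]\\
&\quad+(f^{k-1})'(y)\big[f'(f^{k-1}(x))-f'(f^{k-1}(y))\big].
\end{align*}
Taking absolute values and applying the triangle inequality, I bound the four factors using, respectively, \eqref{05} (so $|f'(f^{k-1}(x))|\le M$); the induction hypothesis (so the first bracket is at most $M^*(\sum_{j=k-2}^{2k-4}M^j)|x-y|$); the derivative bound $|(f^{k-1})'(y)|\le M^{k-1}$ from above; and \eqref{06} together with the Lipschitz bound $|f^{k-1}(x)-f^{k-1}(y)|\le M^{k-1}|x-y|$ (so the second bracket is at most $M^*M^{k-1}|x-y|$). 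This produces
\[
|(f^k)'(x)-(f^k)'(y)|\le M^*|x-y|\Big[M\sum_{j=k-2}^{2k-4}M^j+M^{2k-2}\Big].
\]

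The only real bookkeeping is to recognize that this bracketed expression collapses into the claimed sum: shifting the index gives $M\sum_{j=k-2}^{2k-4}M^j=\sum_{j=k-1}^{2k-3}M^j$, and appending the term $M^{2k-2}$ yields exactly $\sum_{j=k-1}^{2k-2}M^j$, completing the induction. I expect the main points to watch are getting these index shifts right and confirming that all iterates $f^i(x)$ remain in $I$, so that \eqref{05} and \eqref{06} are legitimately applicable at every evaluation point.
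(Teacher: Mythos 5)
Your proof is correct: the base case is exactly \eqref{06}, the chain-rule decomposition with the cross term is valid, each factor is bounded legitimately (all iterates stay in $I$ since $f$ is a self-map), and the index shift $M\sum_{j=k-2}^{2k-4}M^j+M^{2k-2}=\sum_{j=k-1}^{2k-2}M^j$ checks out. The paper itself quotes this lemma from Zhang (1990) without reproducing a proof, and your induction is precisely the standard argument given there.
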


\begin{lemma}{\rm (Lemma 2.2 of \cite{zhang1990})} \label{L0}
	Let $f_1, f_2\in \mathcal{C}(I)$ satisfy $|f_1(x)-f_1(y)|\le M|x-y|$ and $|f_2(x)-f_2(y)|\le M|x-y|$ for all $x, y \in I$. Then
	\begin{eqnarray}
	\|f_1^k-f_2^k\|_\infty^I\le \left(\sum_{j=0}^{k-1}M^j\right)\|f_1-f_2\|_\infty^I~~\text{for}~k=1,2,\ldots.
	\end{eqnarray}
\end{lemma}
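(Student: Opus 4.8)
The plan is to prove the inequality by induction on $k$, using a telescoping insertion to reduce the bound for the $(k+1)$-th iterates to the bound for the $k$-th iterates. The key preliminary observation is that, since $f_1,f_2\in\mathcal{C}(I)$ are continuous self-maps of $I$, every iterate $f_i^k$ is again a well-defined continuous self-map of $I$; in particular each point of the form $f_i^k(x)$ lies in $I$, so all the quantities $\|f_1^k-f_2^k\|_\infty^I$ are meaningful. The base case $k=1$ is immediate, since the asserted inequality then reads $\|f_1-f_2\|_\infty^I\le\|f_1-f_2\|_\infty^I$, which holds with equality because $\sum_{j=0}^{0}M^j=1$.

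For the inductive step I would assume the estimate for a given $k$ and, for each $x\in I$, decompose
\[
f_1^{k+1}(x)-f_2^{k+1}(x)=\bigl(f_1(f_1^k(x))-f_1(f_2^k(x))\bigr)+\bigl(f_1(f_2^k(x))-f_2(f_2^k(x))\bigr).
\]
The first parenthesis is controlled by the Lipschitz hypothesis on $f_1$, giving $|f_1(f_1^k(x))-f_1(f_2^k(x))|\le M\,|f_1^k(x)-f_2^k(x)|\le M\,\|f_1^k-f_2^k\|_\infty^I$, while the second parenthesis is bounded by $\|f_1-f_2\|_\infty^I$ precisely because $f_2^k(x)\in I$. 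Combining these and taking the supremum over $x\in I$ produces the recursion $\|f_1^{k+1}-f_2^{k+1}\|_\infty^I\le M\,\|f_1^k-f_2^k\|_\infty^I+\|f_1-f_2\|_\infty^I$. Inserting the inductive hypothesis $\|f_1^k-f_2^k\|_\infty^I\le\bigl(\sum_{j=0}^{k-1}M^j\bigr)\|f_1-f_2\|_\infty^I$ and using $M\sum_{j=0}^{k-1}M^j+1=\sum_{j=0}^{k}M^j$ closes the induction.

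There is no genuine obstacle here; the only point demanding care is the choice of which map to insert in the telescoping. One must split off $f_1$ evaluated at $f_2^k$ (rather than, say, $f_2$ evaluated at $f_1^k$), so that the Lipschitz constant $M$ multiplies the $k$-th iterate difference while the zeroth-order deviation $\|f_1-f_2\|_\infty^I$ is measured at the in-range point $f_2^k(x)$. This is exactly what yields a clean geometric sum with ratio $M$ rather than a mismatched recursion, and it is also where the self-map property of $f_2$ on $I$ is essential.
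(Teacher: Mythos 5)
Your proof is correct. The paper does not prove this lemma at all---it is quoted verbatim as Lemma 2.2 of the cited reference \cite{zhang1990}---and your induction with the telescoping split $f_1^{k+1}(x)-f_2^{k+1}(x)=\bigl(f_1(f_1^k(x))-f_1(f_2^k(x))\bigr)+\bigl(f_1(f_2^k(x))-f_2(f_2^k(x))\bigr)$ is the standard argument for this estimate, with the one delicate point (which map to evaluate at which iterate, so that the Lipschitz constant hits the $k$-th order difference and the zeroth-order difference is taken at a point guaranteed to lie in $I$) handled correctly.
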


\begin{lemma}{\rm (Lemma 2.3 of \cite{zhang1990})} \label{L4}
	Let $f_1, f_2\in \mathcal{C}^1(I)$ satisfy \eqref{05} and \eqref{06}. Then 
	\begin{eqnarray*}
		\|(f_1^{k+1})'-(f_2^{k+1})'\|_\infty^I\le (k+1)M^k\|f_1'-f_2'\|_\infty^I +Q(k+1)M^*\left(\sum_{j=1}^{k}(k-j+1)M^{k+j-1}\right)\|f_1-f_2\|_\infty^I
	\end{eqnarray*}
	for $k=0,1,2, \ldots$, where 
	\begin{eqnarray}\label{Q}
	Q(s):= \left\{\begin{array}{cll}
	0&\text{if}& s=1,\\
	1&\text{if}& s=2,3,\ldots.
	\end{array}\right.
	\end{eqnarray}
\end{lemma}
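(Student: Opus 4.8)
The plan is to prove the estimate by induction on $k$, the workhorse being the chain-rule identity $(f^{k+1})'(x)=f'(f^k(x))\,(f^k)'(x)$ together with a single ``add-and-subtract'' splitting. Writing $a_k:=\|(f_1^k)'-(f_2^k)'\|_\infty^I$, $P:=\|f_1'-f_2'\|_\infty^I$ and $D:=\|f_1-f_2\|_\infty^I$, I first derive a recursion for $a_k$. For each $x\in I$ I split
\[
(f_1^{k+1})'(x)-(f_2^{k+1})'(x)=f_1'(f_1^k(x))\bigl[(f_1^k)'(x)-(f_2^k)'(x)\bigr]+\bigl[f_1'(f_1^k(x))-f_2'(f_2^k(x))\bigr](f_2^k)'(x).
\]
The first bracket is dominated by $a_k$, and the factor $f_1'(f_1^k(x))$ by $M$ via \eqref{05} (here I use that $f_1$ is a self-map of $I$, so $f_1^k(x)\in I$). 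For the second summand, $(f_2^k)'(x)=\prod_{i=0}^{k-1}f_2'(f_2^i(x))$ is bounded by $M^k$ using \eqref{05} again, while inserting $\pm f_1'(f_2^k(x))$ and applying the Lipschitz bound \eqref{06} to $f_1'$ together with the definition of $P$ gives $|f_1'(f_1^k(x))-f_2'(f_2^k(x))|\le M^*\|f_1^k-f_2^k\|_\infty^I+P$.

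Assembling these bounds yields the recursion $a_{k+1}\le M a_k+M^kP+M^kM^*\|f_1^k-f_2^k\|_\infty^I$. Since \eqref{05} gives $|f_i(x)-f_i(y)|\le M|x-y|$ for $i=1,2$, Lemma \ref{L0} applies and replaces the remaining norm by $\bigl(\sum_{j=0}^{k-1}M^j\bigr)D$, so that
\[
a_{k+1}\le M a_k+M^kP+M^kM^*\Bigl(\sum_{j=0}^{k-1}M^j\Bigr)D.
\]
This single inequality is the only analytic input; everything after it is bookkeeping. Note that Lemma \ref{L1} is not needed for this particular lemma.

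Finally I run the induction. The base case $k=0$ is an equality, since $f^0={\rm id}$ forces $a_1=P$ while the claimed right-hand side is $P$ because $Q(1)=0$. For the step I substitute the inductive hypothesis for $a_k$ into the recursion: the $P$-terms combine cleanly into $(k+1)M^kP$, and after reindexing $M^k\sum_{j=0}^{k-1}M^j=\sum_{j=1}^{k}M^{k+j-1}$, the coefficient of $M^*D$ becomes $Q(k)\sum_{j=1}^{k-1}(k-j)M^{k+j-1}+\sum_{j=1}^{k}M^{k+j-1}$, which I must identify with $Q(k+1)\sum_{j=1}^{k}(k-j+1)M^{k+j-1}$. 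The delicate point, and the only real friction, is the $Q$-function bookkeeping: for $k\ge 2$ both $Q$-values equal $1$, and merging the two sums term by term converts each $(k-j)+1$ into $(k-j+1)$ for $1\le j\le k-1$ while the leftover $j=k$ term of the second sum supplies exactly the missing summand; the case $k=1$, where $Q(1)=0$ kills the first sum, is checked directly. This completes the induction.
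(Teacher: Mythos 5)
Your proof is correct. The paper itself offers no argument for this lemma --- it is quoted verbatim as Lemma 2.3 of \cite{zhang1990} --- so there is nothing in-paper to compare against; your induction via the splitting $(f_1^{k+1})'-(f_2^{k+1})'=f_1'(f_1^k)\bigl[(f_1^k)'-(f_2^k)'\bigr]+\bigl[f_1'(f_1^k)-f_2'(f_2^k)\bigr](f_2^k)'$, the bound $|f_1'(f_1^k(x))-f_2'(f_2^k(x))|\le M^*\|f_1^k-f_2^k\|_\infty^I+\|f_1'-f_2'\|_\infty^I$, and Lemma \ref{L0} is the standard route and all the bookkeeping checks out: the recursion $a_{k+1}\le Ma_k+M^kP+M^kM^*\bigl(\sum_{j=0}^{k-1}M^j\bigr)D$ holds, the base case $k=0$ is an identity consistent with $Q(1)=0$, and the merge of the two sums in the inductive step yields exactly $Q(k+1)\sum_{j=1}^{k}(k-j+1)M^{k+j-1}$ in both the $k=1$ and $k\ge 2$ cases. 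Your observation that Lemma \ref{L1} is not needed here is also accurate.
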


\begin{lemma}{\rm (Lemma 2.4 of \cite{zhang1990})} \label{L2}
	Let $f\in \mathcal{C}^1(I)$ satisfies \eqref{06} and let $\delta \le f'(x)$ for all $x\in I$. Then 
	\begin{eqnarray*}
		|(f^{-1}(x))'-(f^{-1}(y))'|&\le & \frac{M^*}{\delta^3}|x-y|, \quad \forall x, y \in I.
	\end{eqnarray*}
\end{lemma}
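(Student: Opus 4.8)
The plan is to express $(f^{-1})'$ via the inverse function theorem and then transfer the Lipschitz control on $f'$ furnished by \eqref{06} across to $f^{-1}$, absorbing the change of variables into powers of $\delta$. Since $f'(x)\ge \delta>0$ on $I$, the map $f$ is strictly increasing, hence a $\mathcal{C}^1$-diffeomorphism onto its image; in the setting of the statement $f^{-1}$ is thus well defined and differentiable on $I$, and the inverse function theorem gives $(f^{-1})'(x)=1/f'(f^{-1}(x))$ for all $x\in I$.

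First I would fix $x,y\in I$ and set $u:=f^{-1}(x)$ and $v:=f^{-1}(y)$. Placing the two reciprocals over a common denominator yields
\[
(f^{-1})'(x)-(f^{-1})'(y)=\frac{f'(v)-f'(u)}{f'(u)\,f'(v)},
\]
so that $|(f^{-1})'(x)-(f^{-1})'(y)|=|f'(u)-f'(v)|/(f'(u)f'(v))$. Then I would estimate numerator and denominator separately: since $u,v\in I$, hypothesis \eqref{06} bounds the numerator by $M^*|u-v|$, while the hypothesis $\delta\le f'(\cdot)$ bounds the denominator below by $\delta^2$.

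It remains to control $|u-v|=|f^{-1}(x)-f^{-1}(y)|$. Applying the mean value theorem to $f$, there is a point $\xi$ between $u$ and $v$ with $x-y=f(u)-f(v)=f'(\xi)(u-v)$; since $f'(\xi)\ge\delta$, this gives $|u-v|\le|x-y|/\delta$ (equivalently, $f^{-1}$ is Lipschitz with constant $1/\delta$). Assembling the three estimates,
\[
|(f^{-1})'(x)-(f^{-1})'(y)|\le\frac{M^*|u-v|}{\delta^2}\le\frac{M^*}{\delta^2}\cdot\frac{|x-y|}{\delta}=\frac{M^*}{\delta^3}|x-y|,
\]
which is the desired inequality. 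The argument is elementary and I anticipate no real obstacle; the only point demanding care is the bookkeeping of the powers of $\delta$ — two factors come from the two copies of $f'$ in the denominator and a third from the Lipschitz constant of $f^{-1}$, producing $\delta^3$ in total.
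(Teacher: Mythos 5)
Your argument is correct and complete: the inverse function theorem, the common-denominator identity $(f^{-1})'(x)-(f^{-1})'(y)=\bigl(f'(v)-f'(u)\bigr)/\bigl(f'(u)f'(v)\bigr)$, the Lipschitz bound \eqref{06} on the numerator, the lower bound $\delta^2$ on the denominator, and the $1/\delta$-Lipschitz estimate for $f^{-1}$ combine exactly as you say to give $M^*/\delta^3$. The paper itself gives no proof of this lemma (it is quoted verbatim from Lemma 2.4 of \cite{zhang1990}), and your derivation is the standard one for that result, so there is nothing to reconcile.
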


\begin{lemma}{\rm (Lemma 2.5 of \cite{zhang1990})} \label{L3}
	Let $f_1, f_2$ be homeomorphisms of $I$ onto itself such that 
	\begin{eqnarray*}
		|f_j(x)-f_j(y)|\le M^*|x-y|, ~\forall x, y \in I~\text{and}~j=1,2.
	\end{eqnarray*}
	Then 
	\begin{eqnarray*}
		\|f_1-f_2\|_\infty^I \le M^*\|f_1^{-1}-f_2^{-1}\|_\infty^I. 
	\end{eqnarray*}
\end{lemma}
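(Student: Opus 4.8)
The plan is to establish the inequality pointwise and then pass to the supremum: I would show that for every $x\in I$ one has $|f_1(x)-f_2(x)|\le M^*\|f_1^{-1}-f_2^{-1}\|_\infty^I$, and take $\sup_{x\in I}$ at the end. The entire argument rests on a single change-of-variable trick that rewrites a difference of the maps as the image under $f_2$ of a difference of their inverses, after which the Lipschitz hypothesis on $f_2$ finishes the job.

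Concretely, I would fix $x\in I$ and set $t:=f_1(x)$. Since $f_1$ maps $I$ onto $I$, we have $t\in I$, so $f_2^{-1}(t)$ is defined and again lies in $I$. The key identity is $f_1(x)=t=f_2(f_2^{-1}(t))$, which lets me write $f_1(x)-f_2(x)=f_2(f_2^{-1}(t))-f_2(x)$. Applying the Lipschitz bound for $f_2$ then gives $|f_1(x)-f_2(x)|\le M^*|f_2^{-1}(t)-x|$. Finally, because $x=f_1^{-1}(f_1(x))=f_1^{-1}(t)$, the quantity $|f_2^{-1}(t)-x|$ is exactly $|f_2^{-1}(t)-f_1^{-1}(t)|$, which is at most $\|f_1^{-1}-f_2^{-1}\|_\infty^I$ since $t\in I$. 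Combining these yields the pointwise estimate.

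The only delicate point — and what I would flag as the main, albeit mild, obstacle — is the bookkeeping of membership: one must check that every argument to which a Lipschitz estimate or a sup-norm is applied genuinely lies in $I$. This is precisely what the hypothesis that $f_1,f_2$ are homeomorphisms of $I$ \emph{onto itself} guarantees: surjectivity of $f_1$ makes $t=f_1(x)\in I$, whence $f_2^{-1}(t)\in I$ is well defined, and $f_1^{-1}(t)=x\in I$, so evaluating $\|f_1^{-1}-f_2^{-1}\|_\infty^I$ at the particular point $t$ is legitimate. I would also remark that the argument uses only the Lipschitz property of $f_2$ together with the surjectivity of $f_1$, so the symmetric appearance of $f_1$ and $f_2$ in the statement is cosmetic. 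Taking the supremum over $x\in I$ then gives $\|f_1-f_2\|_\infty^I\le M^*\|f_1^{-1}-f_2^{-1}\|_\infty^I$, as desired.
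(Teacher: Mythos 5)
Your argument is correct and complete: the identity $f_1(x)-f_2(x)=f_2\bigl(f_2^{-1}(f_1(x))\bigr)-f_2(x)$ together with the Lipschitz bound on $f_2$ and the observation $x=f_1^{-1}(f_1(x))$ gives the pointwise estimate, and all the membership checks you flag do go through since $f_1,f_2$ and their inverses map $I$ onto itself. The paper itself supplies no proof, citing Lemma 2.5 of \cite{zhang1990} instead, and your argument is the standard one for that lemma.
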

For $\lambda_k\in [0,1]$, $1\le k \le n$ such that   $\sum_{k=1}^{n}\lambda_k=1$ and $f\in  \mathcal{F}_{I}(\delta,M, M^*)$, define $L_f: I\to I$ by
$$L_f(x)=\sum_{k=1}^{n}\lambda_kf^{k-1}(x),\quad x\in I.$$

\begin{lemma}\label{Lf(x) estimate}
	Let	$f\in  \mathcal{F}_{I}(\delta,M, M^*)$ and 
	$\lambda_k\in [0,1]$ for $1\le k \le n$ such that  $\sum_{k=1}^{n}\lambda_k=1$. Then 
	\begin{description}
		\item[(i)]  $L_f(a)=a$, $L_f(b)=b$ and $\mathcal{R}(L_f)=I$, \label{Lf(x) estimate1}
		\item[(ii)] $\lambda_1 \le L'_f(x) \le K_1, \quad \forall x\in I$,  \label{Lf(x) estimate2}
		\item[(iii)] $0<\dfrac{1}{K_1}\le (L^{-1}_f(x))'\le \dfrac{1}{\lambda_1}, \quad \forall x\in I$,  \label{Lf(x) estimate3} 
		\item[(iv)] $|L_{f}^{-1}(x)-L_f^{-1}(y)|\le \dfrac{1}{\lambda_1}|x-y|,\quad \forall x,y \in I$, \label{Lf(x) estimate3.1}
		\item[(v)] $|L'_f(x)-L'_f(y)|\le M^*K_0|x-y|,\quad \forall x,y \in I$,  \label{Lf(x) estimate4}
		\item[(vi)] $|(L^{-1}_f(x))'-(L^{-1}_f(y))'|\le \dfrac{M^*K_0}{\lambda_1^3}|x-y|,\quad \forall x,y \in I$,  \label{Lf(x) estimate5}
	\end{description}
	where
	\begin{eqnarray}\label{K0}
	K_0=\sum_{k=1}^{n-1}\lambda_{k+1}\left(
	\sum_{j=k-1}^{2k-2}M^j\right)\quad \text{and}\quad K_1=\sum_{k=1}^{n}\lambda_kM^{k-1}.
	\end{eqnarray} 
\end{lemma}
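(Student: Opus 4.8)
The plan is to treat each of the six assertions in turn, since all of them follow from the chain rule, the inverse function theorem, the mean value theorem, and the cited Lemmas \ref{L1} and \ref{L2}, once one records the structural fact that $L_f$ is a convex combination of the iterates $f^0=\mathrm{id},f^1,\dots,f^{n-1}$. Throughout I would use that $\mathcal{R}(f)=I$ forces $f(I)\subseteq I$, so that for $x\in I$ every iterate $f^{j}(x)$ again lies in $I$; this is precisely what lets the bounds $\delta\le f'\le M$ from \eqref{2} and the Lipschitz estimate \eqref{06} be applied to $f$ at all the points $f^{j}(x)$ that appear below.

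For (i), I would first note that $f(a)=a$ and $f(b)=b$ give $f^{k}(a)=a$ and $f^{k}(b)=b$ for every $k$, whence $L_f(a)=a\sum_{k=1}^n\lambda_k=a$ and $L_f(b)=b$ because $\sum_{k=1}^n\lambda_k=1$. Since each $\lambda_k\ge 0$ and the weights sum to $1$, $L_f(x)$ is a convex combination of the points $f^{k-1}(x)\in I$, so $L_f(I)\subseteq I$; combined with continuity, $L_f(a)=a$ and $L_f(b)=b$ (or, more cleanly, with the strict monotonicity from (ii)), the intermediate value theorem yields $\mathcal{R}(L_f)=I$.

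For (ii), differentiating term by term gives $L_f'(x)=\lambda_1+\sum_{k=2}^n\lambda_k (f^{k-1})'(x)$, and the chain rule yields $(f^{k-1})'(x)=\prod_{j=0}^{k-2}f'(f^{j}(x))$. Because all the arguments $f^{j}(x)$ lie in $I$, \eqref{2} gives $\delta^{k-1}\le (f^{k-1})'(x)\le M^{k-1}$. Dropping the nonnegative terms $k\ge2$ in the lower estimate yields $L_f'(x)\ge\lambda_1$, while summing the upper estimates yields $L_f'(x)\le\sum_{k=1}^n\lambda_k M^{k-1}=K_1$. In particular $L_f'\ge\lambda_1>0$, so $L_f$ is a strictly increasing $\mathcal{C}^1$ bijection of $I$, which justifies the use of $L_f^{-1}$ in (iii)--(vi). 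Assertion (iii) is then immediate from the inverse function theorem, $(L_f^{-1})'(x)=1/L_f'(L_f^{-1}(x))$, together with the two-sided bound in (ii); and (iv) follows from (iii) by the mean value theorem.

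The two Lipschitz estimates are the heart of the matter. For (v), I would write $L_f'(x)-L_f'(y)=\sum_{k=2}^n\lambda_k\big[(f^{k-1})'(x)-(f^{k-1})'(y)\big]$ (the $k=1$ term vanishes) and apply Lemma \ref{L1} to $f^{k-1}$, i.e.\ with the lemma's index equal to $k-1$, to bound the $k$-th summand by $\lambda_k M^*\big(\sum_{j=k-2}^{2k-4}M^j\big)|x-y|$. The substitution $k=k'+1$ then shows $\sum_{k=2}^n\lambda_k\sum_{j=k-2}^{2k-4}M^j=\sum_{k'=1}^{n-1}\lambda_{k'+1}\sum_{j=k'-1}^{2k'-2}M^j=K_0$, giving exactly $|L_f'(x)-L_f'(y)|\le M^*K_0|x-y|$. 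Finally (vi) follows by applying Lemma \ref{L2} to $L_f$: by (ii) its derivative is bounded below by $\lambda_1$, and by (v) its derivative is Lipschitz with constant $M^*K_0$, so the lemma returns the bound $\frac{M^*K_0}{\lambda_1^3}|x-y|$. I expect the only genuine obstacle to be the index bookkeeping in (v): one must check that applying Lemma \ref{L1} to the $(k-1)$-st iterate produces the sum $\sum_{j=k-2}^{2k-4}M^j$ and that the shift $k=k'+1$ reproduces the definition of $K_0$ in \eqref{K0} verbatim. Everything else is a routine application of the quoted lemmas and standard calculus, the recurring point being that $f(I)\subseteq I$ keeps every iterate inside $I$, where the hypotheses \eqref{2} and \eqref{06} are available.
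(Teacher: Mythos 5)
Your proposal is correct and follows essentially the same route as the paper: the convex-combination argument for (i), the chain-rule bounds for (ii), the inverse function theorem and mean value theorem for (iii)--(iv), Lemma \ref{L1} for (v), and Lemma \ref{L2} applied to $L_f$ for (vi). The only difference is cosmetic index bookkeeping in (v), where the paper writes the sum directly over $k=1,\dots,n-1$ rather than shifting from $k=2,\dots,n$.
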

\begin{proof}
	Since $0\le \lambda_k\le1$ for $1\le k\le n$ such that  $\sum_{k=1}^{n}\lambda_k=1$, it can be easily seen that $L_f(a)=a$, $L_f(b)=b$ and $a\le L_f(x)\le b$  for all $x\in I$. So, $\mathcal{R}(L_f)=I$, proving result {\bf (i)}.
	
	For each $x\in I$, by using \eqref{2}, we have
	\begin{eqnarray*}
		L_f'(x)=\lambda_1+\sum_{k=2}^{n}\lambda_k\left(\prod_{j=0}^{k-2}f'(f^j(x))\right)\le \lambda_1+\sum_{k=2}^{n}\lambda_kM^{k-1}=\sum_{k=1}^{n}\lambda_kM^{k-1}=K_1
	\end{eqnarray*}
	and 
	\begin{eqnarray*}
		L_f'(x)=\lambda_1+\sum_{k=2}^{n}\lambda_k\left(\prod_{j=0}^{k-2}f'(f^j(x))\right)\ge \lambda_1+\sum_{k=2}^{n}\lambda_k\delta^{k-1}\ge \lambda_1.
	\end{eqnarray*}
	This proves result {\bf (ii)}. 
	
	Since $L_f$ is strictly increasing map of $I$ onto itself, clearly $L_f^{-1}$ is a well defined map on $I$ such that $L_f^{-1}(a)=a$ and $L_f^{-1}(b)=b$. Also, as $L_f'(x)>0$ for all $x\in I$ by result {\bf (ii)}, using inverse function theorem it follows that $L_f^{-1}$ is differentiable on $I$ and  $(L_f^{-1})'(x)=(L_f(x))^{-1}$. Therefore, by result {\bf (ii)}, we have 
	\begin{eqnarray*}
		0<\dfrac{1}{K_1}\le (L_f(x))^{-1}\le \dfrac{1}{\lambda_1},
	\end{eqnarray*}
	proving result {\bf (iii)}. Result {\bf (iv)} follows from result {\bf (iii)} and the mean value theorem. 
	
	Since $f$ satisfy \eqref{2} and \eqref{06}, by using Lemma \ref{L1}, we have  
	\begin{eqnarray*}
		|L'_f(x)-L'_f(y)|&=&\left| \sum_{k=1}^{n-1}\lambda_{k+1}(f^k(x))'-\sum_{k=1}^{n-1}\lambda_{k+1}(f^k(y))'\right|\\
		&\le &\sum_{k=1}^{n-1}\lambda_{k+1}|(f^k(x))'-(f^k(y))'|\\
		&\le &\sum_{k=1}^{n-1}\lambda_{k+1}M^*\left(\sum_{j=k-1}^{2k-2}M^j\right)|x-y|=M^*K_0|x-y| 
	\end{eqnarray*}
	for all $x, y \in I$, proving result {\bf (v)}.
	Also, since results {\bf (ii)} and {\bf (iv)} are true for $L_f$, result {\bf (vi)} follows by Lemma \ref{L2}.   
\end{proof}

\begin{lemma}\label{Lf estimate}
	Let	$f_1, f_2\in  \mathcal{F}_{I}(\delta,M, M^*)$ and 
	$\lambda_k\in [0,1]$ for $1\le k \le n$ such that  $\sum_{k=1}^{n}\lambda_k=1$. Then 
	\begin{description}
		\item[(i)] $\|L_{f_1}-L_{f_2}\|_\infty^I \le K_2\|f_1-f_2\|_\infty^I$, \label{Lf estimate1}
		\item[(ii)] $\|L^{-1}_{f_1}-L^{-1}_{f_2}\|_\infty^I \le \dfrac{1}{\lambda_1}\|L_{f_1}-L_{f_2}\|_\infty^I$, \label{Lf estimate2}
		\item[(iii)] $\|L^{-1}_{f_1}-L^{-1}_{f_2}\|_\infty^I \le \dfrac{K_2}{\lambda_1}\|f_1-f_2\|_\infty^I$, \label{Lf estimate3}
		\item[(iv)] $\|L'_{f_1}-L'_{f_2}\|_\infty^I \le K_3\|f_1-f_2\|_\infty^I+K_4\|f'_1-f'_2\|_\infty^I$, \label{Lf estimate4}
		\item[(v)] $\|(L^{-1}_{f_1})'-(L^{-1}_{f_2})'\|_\infty^I \le K_5\|f_1-f_2\|_\infty^I+K_6\|f'_1-f'_2\|_\infty^I$, \label{Lf estimate5}
	\end{description}
	where 
	\begin{eqnarray}
	K_2&=&\sum_{k=2}^{n}\lambda_k\left(
	\sum_{j=0}^{k-2}M^{j}\right),\label{K2}\\  K_3&=&\sum_{k=0}^{n-2}\lambda_{k+2}Q(k+1)M^*\left(\sum_{j=1}^{k}(k-j+1)M^{k+j-1}\right), \nonumber\\
	K_5&=&\dfrac{K_3}{\lambda_1^2}+\dfrac{M^*K_0K_2}{\lambda_1^3},\nonumber \\
	K_4&=&\sum_{k=0}^{n-2}\lambda_{k+2}(k+1)M^k, \quad \text{and}\quad K_6=\dfrac{K_4}{\lambda_1^2} \label{K46}
	\end{eqnarray}
	with $Q$ defined as in \eqref{Q}.
\end{lemma}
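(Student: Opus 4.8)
The plan is to establish the five estimates in the stated order, since each later bound feeds on the earlier ones, and to reserve the inverse-derivative estimate (v) for last as the genuine obstacle. For (i) I would write $L_{f_1}(x)-L_{f_2}(x)=\sum_{k=2}^{n}\lambda_k\bigl(f_1^{k-1}(x)-f_2^{k-1}(x)\bigr)$, the $k=1$ term dropping out because $f_1^0=f_2^0=\mathrm{id}$. Since $f_1,f_2$ satisfy \eqref{2}, they are $M$-Lipschitz on $I$, so Lemma \ref{L0} gives $\|f_1^{k-1}-f_2^{k-1}\|_\infty^I\le\bigl(\sum_{j=0}^{k-2}M^j\bigr)\|f_1-f_2\|_\infty^I$; summing against the weights $\lambda_k$ reproduces exactly the constant $K_2$ of \eqref{K2}, giving (i).

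For (ii) and (iii) I would first invoke Lemma \ref{Lf(x) estimate}: parts (i)--(iii) there say that $L_{f_1},L_{f_2}$ are increasing homeomorphisms of $I$ onto itself, and part (iv) says each $L_{f_i}^{-1}$ is $\tfrac{1}{\lambda_1}$-Lipschitz. Applying Lemma \ref{L3} to the pair $L_{f_1}^{-1},L_{f_2}^{-1}$ (whose inverses are $L_{f_1},L_{f_2}$), with Lipschitz constant $\tfrac{1}{\lambda_1}$ playing the role of $M^*$, yields $\|L_{f_1}^{-1}-L_{f_2}^{-1}\|_\infty^I\le\tfrac{1}{\lambda_1}\|L_{f_1}-L_{f_2}\|_\infty^I$, which is (ii); composing this with (i) immediately gives (iii).

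For (iv) I would differentiate, again annihilating the $k=1$ term, to obtain $L_{f_1}'-L_{f_2}'=\sum_{k=1}^{n-1}\lambda_{k+1}\bigl((f_1^k)'-(f_2^k)'\bigr)$, and bound each summand by Lemma \ref{L4}. After reindexing (matching the exponent $k$ to the shift $k+1$ in \eqref{K46}), the coefficient multiplying $\|f_1'-f_2'\|_\infty^I$ collects into $K_4$ and the one multiplying $\|f_1-f_2\|_\infty^I$ into $K_3$, so the estimate falls out once the two constants are recognized from their definitions.

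Part (v) is the hard step. Here I would use the inverse-function identity $(L_{f_i}^{-1})'(x)=1/L_{f_i}'(L_{f_i}^{-1}(x))$ and write the difference as one fraction, whose numerator is $L_{f_2}'(L_{f_2}^{-1}(x))-L_{f_1}'(L_{f_1}^{-1}(x))$ and whose denominator is $L_{f_1}'(L_{f_1}^{-1}(x))\,L_{f_2}'(L_{f_2}^{-1}(x))\ge\lambda_1^2$ by Lemma \ref{Lf(x) estimate}(ii). The key maneuver is the telescoping split of the numerator by adding and subtracting $L_{f_1}'(L_{f_2}^{-1}(x))$: the first piece is controlled by $\|L_{f_1}'-L_{f_2}'\|_\infty^I$ through (iv), and the second by the Lipschitz bound $|L_{f_1}'(u)-L_{f_1}'(v)|\le M^*K_0|u-v|$ of Lemma \ref{Lf(x) estimate}(v) combined with $\|L_{f_1}^{-1}-L_{f_2}^{-1}\|_\infty^I\le\tfrac{K_2}{\lambda_1}\|f_1-f_2\|_\infty^I$ from (iii). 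Dividing by $\lambda_1^2$ and regrouping then produces $K_6=K_4/\lambda_1^2$ on $\|f_1'-f_2'\|_\infty^I$ and $K_5=K_3/\lambda_1^2+M^*K_0K_2/\lambda_1^3$ on $\|f_1-f_2\|_\infty^I$. The main care will be performing the split cleanly and tracking that the two Lipschitz-type inputs combine with precisely the right powers of $\lambda_1$ to recover $K_5$ as defined.
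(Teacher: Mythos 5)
Your proposal is correct and follows essentially the same route as the paper's proof: Lemma \ref{L0} for (i), Lemma \ref{L3} applied to $L_{f_1}^{-1},L_{f_2}^{-1}$ together with Lemma \ref{Lf(x) estimate}(iv) for (ii)--(iii), Lemma \ref{L4} after reindexing for (iv), and for (v) the inverse-function identity with the telescoping split, the lower bound $L_f'\ge\lambda_1$, and the Lipschitz estimate of Lemma \ref{Lf(x) estimate}(v). The only (immaterial) difference is that you insert the cross-term $L_{f_1}'(L_{f_2}^{-1}(x))$ where the paper inserts $L_{f_2}'(L_{f_1}^{-1}(x))$; by symmetry both yield the same constants.
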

\begin{proof}
	For each $x\in I$, we have 
	\begin{eqnarray*}
		|L_{f_1}(x)-L_{f_2}(x)|&\le & \sum_{k=2}^{n}\lambda_k|f_1^{k-1}(x)-f_2^{k-1}(x)|\\
		&\le &\sum_{k=2}^{n}\lambda_k\|f_1^{k-1}-f_2^{k-1}\|_\infty^I\\
		&\le& \sum_{k=2}^{n}\lambda_k\left(\sum_{j=0}^{k-2}M^j\right)\|f_1-f_2\|_\infty^I~~(\text{by using Lemma \ref{L0}})\\
		&=&K_2\|f_1-f_2\|_\infty^I,
	\end{eqnarray*}
	proving result {\bf (i)}. By using result {\bf (iv)} of Lemma \ref{Lf(x) estimate} for $f_1, f_2$ and Lemma \ref{L3} for $L_{f_1}^{-1}, L_{f_2}^{-1}$, we get that
	\begin{eqnarray*}
		\|L^{-1}_{f_1}-L^{-1}_{f_2}\|_\infty^I &\le& \dfrac{1}{\lambda_1}\|L_{f_1}-L_{f_2}\|_\infty^I\\
		&\le& \dfrac{K_2}{\lambda_1}\|f_1-f_2\|_\infty^I~~~(\text{by using result {\bf (i)}}),
	\end{eqnarray*}
	proving results {\bf (ii)} and {\bf (iii)}.  For each $x\in I$, by using Lemma \ref{L4} we have 
	\begin{eqnarray*}
			|L'_{f_1}(x)-L'_{f_2}(x)|
	&\le&\left| \sum_{k=2}^{n}\lambda_{k}(f_1^{k-1}(x))'-\sum_{k=2}^{n}\lambda_{k}(f_2^{k-1}(x))'\right|\\
	&\le &\sum_{k=2}^{n}\lambda_{k}|(f_1^{k-1}(x))'-(f_2^{k-1}(x))'|\\
		&\le & \sum_{k=2}^{n}\lambda_{k}\|(f_1^{k-1})'-(f_2^{k-1})'\|_\infty^I
	\end{eqnarray*}
	\begin{eqnarray*}
		&=&\sum_{k=0}^{n-2}\lambda_{k+2}\|(f_1^{k+1})'-(f_2^{k+1})'\|_\infty^I \\
		&\le &\sum_{k=0}^{n-2}\lambda_{k+2}\left\{(k+1)M^k\|f_1'-f_2'\|_\infty^I\right. \big.+Q(k+1)M^*\left(\sum_{j=1}^{k}(k-j+1)M^{k+j-1}\right)\|f_1-f_2\|_\infty^I\big\}\\
		&=&K_3\|f_1-f_2\|_\infty^I+K_4\|f'_1-f'_2\|_\infty^I,
	\end{eqnarray*}
	and therefore result {\bf (iv)} is proved. Also, for each $x\in I$, we have 
	
	\noindent $|(L^{-1}_{f_1}(x))'-(L^{-1}_{f_2}(x))'|$
	\begin{eqnarray*}
		&=&\left|\frac{1}{L_{f_1}'(L_{f_1}^{-1}(x))}-\frac{1}{L_{f_2}'(L_{f_2}^{-1}(x))} \right|\\
		&\le& \frac{1}{\lambda_1^2}|L_{f_1}'(L_{f_1}^{-1}(x))-L_{f_2}'(L_{f_2}^{-1}(x))|~~(\text{using result {\bf (ii)} of Lemma}~\ref{Lf(x) estimate})\\
		&\le & \frac{1}{\lambda_1^2}\left\{|L_{f_1}'(L_{f_1}^{-1}(x))-L_{f_2}'(L_{f_1}^{-1}(x))|+|L_{f_2}'(L_{f_1}^{-1}(x))-L_{f_2}'(L_{f_2}^{-1}(x))|\right\}\\
		&\le & \frac{1}{\lambda_1^2}\left\{\|L_{f_1}'-L_{f_2}'\|_\infty^I+M^*K_0|L_{f_1}^{-1}(x)-L_{f_2}^{-1}(x)|\right\}~~(\text{using result {\bf (v)} of Lemma}~\ref{Lf(x) estimate})\\
		&\le & \frac{1}{\lambda_1^2}\left\{K_3\|f_1-f_2\|_\infty^I+K_4\|f'_1-f'_2\|_\infty^I+M^*K_0\|L_{f_1}^{-1}-L_{f_2}^{-1}\|_\infty^I\right\}~~(\text{using result {\bf (iv)}})\\
		&\le & \frac{1}{\lambda_1^2}\left\{K_3\|f_1-f_2\|_\infty^I+K_4\|f'_1-f'_2\|_\infty^I+\frac{M^*K_0K_2}{\lambda_1}\|f_1-f_2\|_\infty^I\right\}~~(\text{using result {\bf (iii)}})\\
		&=&K_5\|f_1-f_2\|_\infty^I+K_6\|f'_1-f'_2\|_\infty^I,
	\end{eqnarray*}
	proving result {\bf (v)}.  
\end{proof}

\section{The  result}\label{Sec3}
We now prove our intended result on the existence and uniqueness of differentiable solutions of \eqref{1}.

\begin{theorem}\label{existance}
	Let	$\lambda_1>0$ and $\lambda_k\in [0,1]$ for $2\le k \le n$ such that  $\sum_{k=1}^{n}\lambda_k=1$. Let $K_0, K_2, K_4$ and $K_6$ be as defined in Lemmas \ref{Lf(x) estimate} and \ref{Lf estimate}. Then for arbitrary $G\in  \mathcal{G}_{J}(\delta,\lambda_1M, M^*)$ there exists a unique solution $g$ of \eqref{1} in $\mathcal{B}_{J}(\delta,M, M')$ provided $\lambda_1>K_0M^2>0$ and $0<K<1$, where
	\begin{eqnarray}
	M'=\dfrac{M^*}{\lambda_1-K_0M^2}\quad \text{and}\quad K=\max \left\{\dfrac{K_2}{\lambda_1}+\lambda_1MK_5',~ \lambda_1MK_6\right\}
	\end{eqnarray}
	with
	\begin{eqnarray}
	K_5'&=&\dfrac{K_3'}{\lambda_1^2}+\dfrac{M'K_0K_2}{\lambda_1^3},\\
	K_3'&=&\sum_{k=0}^{n-2}\lambda_{k+2}Q(k+1)M'\left(\sum_{j=1}^{k}(k-j+1)M^{k+j-1}\right). \label{K5'}
	\end{eqnarray}
	
\end{theorem}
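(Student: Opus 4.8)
The plan is to pass through the conjugacy to the polynomial-like equation \eqref{(*)} and then solve the latter by Banach's contraction principle. First I would use Proposition \ref{P1}(i) to convert the hypothesis $G\in\mathcal{G}_J(\delta,\lambda_1 M,M^*)$ into $F:=\psi^{-1}\circ G\circ\psi\in\mathcal{F}_I(\delta,\lambda_1 M,M^*)$ with $I=\log J$, and note that by Proposition \ref{P1}(ii) (together with the reverse implication, proved the same way) and Proposition \ref{P3}, a map $g$ lies in $\mathcal{B}_J(\delta,M,M')$ and solves \eqref{1} if and only if $f:=\psi^{-1}\circ g\circ\psi$ lies in $\mathcal{A}_I(\delta,M,M')$ and solves \eqref{(*)}. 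Thus it suffices to produce a unique $f\in\mathcal{A}_I(\delta,M,M')$ solving \eqref{(*)}.

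The key reformulation is that, with $L_f(x)=\sum_{k=1}^n\lambda_k f^{k-1}(x)$, one has $\sum_{k=1}^n\lambda_k f^k(x)=L_f(f(x))$, so \eqref{(*)} is equivalent to $L_f\circ f=F$, i.e.\ (since $L_f$ is an increasing bijection of $I$ by Lemma \ref{Lf(x) estimate}) to the fixed-point equation $f=T[f]$, where $T[f]:=L_f^{-1}\circ F$. I would therefore define $T$ on the complete metric space $\mathcal{A}_I(\delta,M,M')$ (complete by Proposition \ref{Fdm complete}) and seek its fixed point. A point worth stressing is that every $f\in\mathcal{A}_I(\delta,M,M')$ satisfies \eqref{05} and \eqref{06}; since the proofs of Lemmas \ref{Lf(x) estimate} and \ref{Lf estimate} use only these two conditions on $I$, all their estimates remain valid for such $f$ with $M^*$ replaced by $M'$, which is exactly the role of the primed constants $K_3',K_5'$.

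Next I would verify $T(\mathcal{A}_I(\delta,M,M'))\subseteq\mathcal{A}_I(\delta,M,M')$. Since $F$ and $L_f^{-1}$ fix $a,b$ and have range $I$, so does $T[f]$; and as $(T[f])'(x)=(L_f^{-1})'(F(x))\,F'(x)$ with $0<(L_f^{-1})'\le 1/\lambda_1$ (Lemma \ref{Lf(x) estimate}(iii)) and $0<\delta\le F'\le\lambda_1 M$ on $I$, the bound $0\le (T[f])'\le M$ on $I$, i.e.\ \eqref{05}, follows, while boundedness of $F'$ on all of $\mathbb{R}$ (from \eqref{2},\eqref{4}) keeps $T[f]\in\mathcal{C}_b^1(\mathbb{R})$. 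The delicate step is \eqref{06}: splitting
\[
(T[f])'(x)-(T[f])'(y)=(L_f^{-1})'(F(x))\bigl(F'(x)-F'(y)\bigr)+F'(y)\bigl((L_f^{-1})'(F(x))-(L_f^{-1})'(F(y))\bigr)
\]
and using $|F'(x)-F'(y)|\le M^*|x-y|$, $|F(x)-F(y)|\le\lambda_1 M|x-y|$ together with Lemma \ref{Lf(x) estimate}(vi) (read with $M'$), one gets the Lipschitz constant $(M^*+M'K_0M^2)/\lambda_1$ for $(T[f])'$ on $I$. This is $\le M'$ precisely when $M^*\le M'(\lambda_1-K_0M^2)$, which holds with equality for the stated $M'=M^*/(\lambda_1-K_0M^2)$; here the hypothesis $\lambda_1>K_0M^2>0$ is what makes $M'$ finite and positive. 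I expect this self-map verification---forcing the constant to close up at exactly $M'$---to be the main obstacle, since it is what pins down the precise value of $M'$.

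Finally I would show $T$ is a contraction. From $T[f]=L_f^{-1}\circ F$ and $\mathcal{R}(F)=I$ one has $\|T[f_1]-T[f_2]\|_\infty\le\|L_{f_1}^{-1}-L_{f_2}^{-1}\|_\infty^I\le(K_2/\lambda_1)\|f_1-f_2\|_\infty^I$ by Lemma \ref{Lf estimate}(iii), while $(T[f_i])'=(L_{f_i}^{-1})'(F)\,F'$ gives $\|(T[f_1])'-(T[f_2])'\|_\infty\le\lambda_1 M\,\|(L_{f_1}^{-1})'-(L_{f_2}^{-1})'\|_\infty^I\le\lambda_1 M\bigl(K_5'\|f_1-f_2\|_\infty^I+K_6\|f_1'-f_2'\|_\infty^I\bigr)$ by Lemma \ref{Lf estimate}(v). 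Adding these and using $\|h\|_\infty^I,\|h'\|_\infty^I\le\|h\|_{\mathcal{C}^1}$ yields $\|T[f_1]-T[f_2]\|_{\mathcal{C}^1}\le K\,\|f_1-f_2\|_{\mathcal{C}^1}$ with the stated $K=\max\{K_2/\lambda_1+\lambda_1 MK_5',\,\lambda_1 MK_6\}<1$. Banach's principle then gives a unique fixed point $f^*\in\mathcal{A}_I(\delta,M,M')$, hence a solution of \eqref{(*)}, and transferring back through Proposition \ref{P1}(ii) and Proposition \ref{P3} produces the required $g\in\mathcal{B}_J(\delta,M,M')$ solving \eqref{1}. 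For uniqueness I would observe that any solution $g\in\mathcal{B}_J(\delta,M,M')$ of \eqref{1} yields, via the conjugacy, an $f\in\mathcal{A}_I(\delta,M,M')$ with $L_f\circ f=F$, i.e.\ a fixed point of $T$; uniqueness of the fixed point then forces uniqueness of $g$.
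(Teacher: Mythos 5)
Your proposal is correct and follows essentially the same route as the paper: conjugate by $\psi(x)=e^x$ to reduce \eqref{1} to \eqref{(*)}, define $Tf=L_f^{-1}\circ F$ on the complete space $\mathcal{A}_I(\delta,M,M')$, verify the self-map property (with the Lipschitz constant of $(Tf)'$ closing up exactly at $M'$) and the contraction estimate with constant $K$ via Lemmas \ref{Lf(x) estimate} and \ref{Lf estimate}, and transfer back through Propositions \ref{P1} and \ref{P3}. Your explicit remarks that the lemmas must be reread with $M^*$ replaced by $M'$ (whence $K_3',K_5'$) and that the converse of Proposition \ref{P1}(ii) is needed for the uniqueness transfer are points the paper leaves implicit, but the argument is the same.
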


\begin{proof}
	Let $G\in  \mathcal{G}_{J}(\delta,\lambda_1M, M^*)$. Then by result {\bf (i)} of Proposition \ref{P1}, we have $F=\psi^{-1}\circ G\circ \psi \in  \mathcal{F}_{I}(\delta,\lambda_1M, M^*)$, where  $\psi (x)=e^x$ and $I=\log J$. Let $\log c=a$ and $\log d=b$. Then $I=[a,b]$ such that $a<b$.

	Define $T:\mathcal{A}_{I}(\delta,M,\frac{M^*}{\lambda_1-K_0M^2})\to \mathcal{C}^1_b(\mathbb{R})$ by
	$$Tf(x)=L_f^{-1}(F(x)),\quad x\in \mathbb{R}.$$
	By definitions of $F$ and $L_f$, we have $Tf(a)=a$ and $Tf(b)=b$. This implies that $I\subseteq \mathcal{R}(Tf)$. Also, since $L_f^{-1}:I\to I$, we have $\mathcal{R}(L_f^{-1})\subseteq I$, and therefore $\mathcal{R}(Tf)\subseteq I$. So, $\mathcal{R}(Tf)=I$. Further, from result {\bf (iii)} of Lemma \ref{Lf(x) estimate}, we have  
	\begin{eqnarray*}
		0<\frac{\delta}{K_1}\le (Tf)'(x)=(L_f^{-1})'(F(x))F'(x)\le \frac{1}{\lambda_1}\lambda_1M=M, \quad \forall x\in I.
	\end{eqnarray*}
	Moreover, since $F\in \mathcal{F}_{I}(\delta,\lambda_1M, M^*)$,  for each $x,y \in I$, we have 
	
	\noindent	$|(Tf)'(x)-(Tf)'(y)|$
	\begin{eqnarray*}
		&=&|(L_f^{-1})'(F(x))F'(x)-(L_f^{-1})'(F(y))F'(y)|\\
		&= & |(L_f^{-1})'(F(x))F'(x)-(L_f^{-1})'(F(x))F'(y)+(L_f^{-1})'(F(x))F'(y)-(L_f^{-1})'(F(y))F'(y)|\\
		&\le & |(L_f^{-1})'(F(x))||F'(x)-F'(y)|+|(L_f^{-1})'(F(x))-(L_f^{-1})'(F(y))||F'(y)|\\
		&\le & \frac{M^*}{\lambda_1}|x-y|+\frac{K_0MM'}{\lambda_1^2}|F(x)-F(y)|~~(\text{using results {\bf (iii)}}~\text{and {\bf (vi)} of Lemma}~\ref{Lf(x) estimate})\\
		&\le & \frac{M^*}{\lambda_1}|x-y|+\frac{K_0MM'}{\lambda_1^2}\lambda_1M|x-y|\\
		&=& \left(\frac{M^*}{\lambda_1}+\frac{K_0M^2M'}{\lambda_1}\right)|x-y|\\
		&=&\frac{M^*}{\lambda_1-K_0M^2}|x-y|.
	\end{eqnarray*}
	So, $Tf\in \mathcal{A}_{I}(\delta,M,\frac{M^*}{\lambda_1-K_0M^2})$, which proves that $T$ is a self-map on $\mathcal{A}_{I}(\delta,M,\frac{M^*}{\lambda_1-K_0M^2})$.
	In order to prove that $T$ is a contraction, consider any $f_1, f_2 \in \mathcal{A}_{I}(\delta,M,\frac{M^*}{\lambda_1-K_0M^2})$. Then for each $x\in \mathbb{R}$, we have
	\begin{eqnarray}
	|Tf_1(x)-Tf_2(x)|&=&|L_{f_1}^{-1}(F(x))-L_{f_2}^{-1}(F(x))|\nonumber\\
	&\le&\|L_{f_1}^{-1}-L_{f_2}^{-1}\|_\infty^I~~(\text{since}~F(x)\in I)\nonumber\\
	&\le &\frac{1}{\lambda_1}|L_{f_1}-L_{f_2}\|_\infty^I~~~(\text{using result {\bf (ii)} of Lemma}~\ref{Lf estimate})\nonumber\\
	&\le &\frac{K_2}{\lambda_1}\|f_1-f_2\|_\infty^I~~~(\text{using result {\bf (i)} of Lemma}~\ref{Lf estimate})\nonumber\\
	&\le &\frac{K_2}{\lambda_1}\|f_1-f_2\|_\infty, \nonumber
	\end{eqnarray}
	implying that 
	\begin{eqnarray}\label{s1}
	\|Tf_1-Tf_2\|_\infty\le \frac{K_2}{\lambda_1}\|f_1-f_2\|_\infty.
	\end{eqnarray}
	Also, for each $x\in \mathbb{R}$, we have
	
	\noindent $|(Tf_1)'(x)-(Tf_2)'(x)|$
	\begin{eqnarray*}
		&=&|(L_{f_1}^{-1})'(F(x))F'(x)-(L_{f_2}^{-1})'(F(x))F'(x)|\\
		&= & |F'(x)| |(L_{f_1}^{-1})'(F(x))-(L_{f_2}^{-1})'(F(x))|\\
		&\le & \lambda_1M\| (L_{f_1}^{-1})'-(L_{f_2}^{-1})'\|_\infty^I~~~(\text{since}~F(x)\in I)\\
		&\le & \lambda_1MK_5'\|f_1-f_2\|_\infty^I+\lambda_1MK_6\|f'_1-f'_2\|_\infty^I~~(\text{using result {\bf (v)} of Lemma}~\ref{Lf estimate})\\
		&\le &\lambda_1MK_5'\|f_1-f_2\|_\infty+\lambda_1MK_6\|f'_1-f'_2\|_\infty.
	\end{eqnarray*}
	Therefore 
	\begin{eqnarray}\label{s2}
	\|(Tf_1)'-(Tf_2)'\|_\infty \le \lambda_1MK_5'\|f_1-f_2\|_\infty+\lambda_1MK_6\|f'_1-f'_2\|_\infty.
	\end{eqnarray}
	Thus 
	\begin{eqnarray*}
		\|Tf_1-Tf_2\|_{\mathcal{C}^1}&=&	\|Tf_1-Tf_2\|_\infty+	\|(Tf_1)'-(Tf_2)'\|_\infty \\
		&\le & \frac{K_2}{\lambda_1}\|f_1-f_2\|_\infty+\lambda_1MK_5'\|f_1-f_2\|_\infty+\lambda_1MK_6\|f'_1-f'_2\|_\infty\\
		& &~~~~~~~~~~~~~~~~~~~~~~~~~~~~~~~~~~~~~~~~(\text{using}~\eqref{s1}~\text{and}~\eqref{s2})\\
		&=& \left(\frac{K_2}{\lambda_1}+\lambda_1MK_5'\right)\|f_1-f_2\|_\infty+\lambda_1MK_6\|f'_1-f'_2\|_\infty\\
		&\le & K\|f_1-f_2\|_{\mathcal{C}^1}.
	\end{eqnarray*}
	Since $0<K<1$, it follows that $T$ is a contraction. By Proposition \ref{Fdm complete}, $\mathcal{A}_{I}(\delta,M,\frac{M^*}{\lambda_1-K_0M^2})$ is complete, and hence by Banach's contraction principle, $T$ has a unique fixed point in $\mathcal{A}_{I}(\delta,M,\frac{M^*}{\lambda_1-K_0M^2})$.  That is, there exists unique $f\in \mathcal{A}_{I}(\delta,M,\frac{M^*}{\lambda_1-K_0M^2})$ such that $L_f^{-1}(F(x))=f(x)$ for all $x\in \mathbb{R}$, which proves that $f$ is the unique solution of \eqref{(*)} in $\mathcal{A}_{I}(\delta,M,\frac{M^*}{\lambda_1-K_0M^2})$. This implies by Propositions \ref{P3} and result {\bf (ii)} of Proposition \ref{P1} that $g=\psi \circ f\circ\psi^{-1}$ is the unique solution of \eqref{1} in $\mathcal{B}_{J}(\delta,M,\frac{M^*}{\lambda_1-K_0M^2})$.
\end{proof}

\section{Example and Remarks}\label{Sec4}
The following example illustrates Theorem \ref{existance}.
\begin{example}
	{\rm 	Consider the equation
		\begin{eqnarray} \label{Ex4}
		(g(x))^{\lambda_1} (g^{2}(x))^{\lambda_2}=G(x), \quad x\in \mathbb{R}_+,
		\end{eqnarray}
		where $\lambda_1=\frac{9}{10}$, $\lambda_2=\frac{1}{10}$ and 
		\begin{equation*}
		G(x)=\left\{\begin{array}{cl}
		1& {\rm if}~x\in (0,1],\\
		2^{x-1}& {\rm if}~ x\in [1,2],\\
		2^{\frac{\log 2}{\log x}}& {\rm if}~x\in [2, \infty).
		\end{array}\right.
		\end{equation*}
		Let $f(x):=\log g(e^x)$ and $F(x):=\log G(e^x)$ for $x\in \mathbb{R}$. Then \eqref{Ex4} reduces to the polynomial-like equation $	\lambda_1	f(x)+\lambda_2f^2(x)=F(x)$,
		where $F:\mathbb{R}\to \mathbb{R}$ is defined by
		\begin{equation*}
		F(x)=\left\{\begin{array}{cl}
		0& {\rm if}~x\le 0,\\
		(e^x-1)	\log 2& {\rm if}~ x\in [0,\log 2],\\
		\frac{(\log 2)^2}{x}& {\rm if}~x\ge \log 2.
		\end{array}\right.
		\end{equation*}
		Choose $\delta=\log 2$, $M=\frac{20\log 2}{9}$ and $M^*=2\log 2$. Then, clearly $F(0)=0$, $F(\log 2)=\log 2$, $F(x)\in I$ for all $x\in \mathbb{R}$, $\delta\le F'(x)\le \lambda_1M$ for all $x\in I$, $|F'(x)|\le \lambda_1M$ for all $x\in \mathbb{R}\setminus I$, and $|F''(x)|\le M^*$ for all $x\in I$, where $I=[0, \log 2]$. Therefore 
		$F\in \mathcal{F}_I(\delta, \lambda_1M, M^*)$, implying  by result {\bf (i)} of Proposition \ref{P1} that $G\in \mathcal{G}_J(\delta, \lambda_1M, M^*)$, where $J=[1,2]$.  Also, from \eqref{K0}-\eqref{K5'}, we have $K_0=K_2=K_4=\lambda_2$, $K_3'=0$, $K_5'=\frac{M^*\lambda_2^2}{\lambda_1^3(\lambda_1-\lambda_2M^2)}$ and $K_6=\frac{\lambda_2}{\lambda_1^2}$, and therefore 
		\begin{eqnarray*}
			K=\max \left\{\dfrac{\lambda_2}{\lambda_1}+\dfrac{MM^*\lambda_2^2}{\lambda_1^2(\lambda_1-\lambda_2M^2)},~ \frac{M\lambda_2}{\lambda_1}\right\}=\max \{0.15089,  0.17115\}=0.0.17115 \in  (0,1).
		\end{eqnarray*} 
		Further, $K_0M^2=\lambda_2M^2=0.23726 \in (0, \lambda_1)$. 
		Thus, all the hypotheses of Theorem \ref{existance} are satisfied. Hence \eqref{Ex4} has a unique solution $g$ in $\mathcal{G}_J(\delta, M, M')$, where $M'=\frac{M^*}{\lambda_1-\lambda_2M^2}=2.09176$. }
\end{example}

Since we have assumed in Theorem \ref{existance} that $\lambda_1>0$ for a technical reason, we cannot solve the iterative root problem $g^n=G$ on $\mathbb{R}_+$  using this result. Further, the assumption that $K>0$ made in Theorem \ref{existance} implies that $\lambda_k\ne 0$ for some $2\le k\le n$. 
However, if $\lambda_k=0$ for all $2\le k \le n$, then $g=G$ is the unique solution of \eqref{1} on $\mathbb{R}_+$. 
Additionally, the assumption that $\sum_{k=1}^{n}\lambda_k=1$ is not severe. 
In fact, if $\sum_{k=1}^{n}\lambda_k>1$, then we can divide each of the exponents in \eqref{1} by $\sum_{k=1}^{n}\lambda_k$ to get the normalized equation, 
but the assumptions on $G$ have to be modified suitably.

Moreover, by using the above observation, Theorem \ref{existance} and Proposition \ref{P4}, we can indeed solve \eqref{1} on $\mathbb{R}_-$ whenever $\lambda_k\in \mathbb{Z}$ for all $1\le k\le n$ such that $\sum_{k=1}^{n}\lambda_k$ is odd. On the other hand, if
$\alpha_k\in \mathbb{R}\setminus \mathbb{Z}$ for some $1\le k \le n$, then for any $G, g\in \mathcal{C}^1(\mathbb{R}_-)$,  $x \mapsto \prod_{k=1}^{n}(g^k(x))^{\lambda_k}$ is a  multi-valued complex map, whereas $x \mapsto G(x)$ is a  single valued real map. So, to obtain the equality \eqref{1},
we have to choose branches of the complex logarithm  suitably, which  depends both on $x$ and  each term of the product $\prod_{k=1}^{n}(g^k(x))^{\lambda_k}$.  Therefore,  solving \eqref{1} on $\mathbb{R}_-$ in this case is very difficult.


\bibliographystyle{amsplain}

\begin{thebibliography}{10}
\bibitem{Baron-Jarczyk}
K. Baron and W. Jarczyk, {\it Recent results on functional equations in a single variable,
perspectives and open problems},  Aequationes Math., {\bf 61} (2001), 1-48.	

\bibitem{CG-MV-SW-WZ}
C. Gopalakrishna, M. Veerapazham, S. Wang, W. Zhang,  {\it Continuous solutions of an iterative equation with multiplication}, 	arXiv:2105.03385.


\bibitem{greenfield}
S. J. Greenfield, R. D. Nussbaum, {\it Dynamics of a quadratic map in two complex variables}, 
 J. Differential Equations, {\bf 169} (2001), No.1, 57-141.

\bibitem{Kuczma1968}
M. Kuczma, {\it Functional Equations in a Single Variable}, Monografie Matematyczne Tom. {\bf 46}, PWN, Warsaw, 1968.

\bibitem{Kuczma}
M. Kuczma, B. Choczewski and R. Ger, {\it Iterative Functional Equations},
Encycl. Math. Appl. Vol. {\bf 32}, Cambridge Univ. Press, Cambridge, 1990.


%


\bibitem{Mu-Su}
V. Murugan, P.V. Subrahmanyam, {\it Special solutions of a general iterative functional equation}, 
 Aequationes Math., {\bf 72} (2006), 269-287.

\bibitem{Mu-Su-2009}
V. Murugan, P.V. Subrahmanyam, {\it Existence of continuous solutions for an iterative functional series equation with variable coefficients}, 
 Aequationes Math., {\bf 78} (2009), 167--176.

\bibitem{Li-Zhang}
L. Li, W. Zhang, {\it Construction of usc solutions for a multivalued iterative equation
of order $n$},  Result. Math., {\bf 62} (2012), 203-216.


\bibitem{Si}
J. Si, {\it Continuous solutions of iterative equation $G(f(x), f^{n_2}(x),\ldots, f^{n_k}(x)) = F(x)$},  J.
	Math. Res. Exp., {\bf 15} (1995), 149-150, in Chinese.



\bibitem{Targonski}
G. Targonski, {\it Topics in Iteration Theory}, Studia Mathematica Skript Vol. {\bf 6}, Vandenhoeck \& Ruprecht, Gottingen,
1981.

\bibitem{Wang-Si}
X. Wang and J. Si, {\it Differentiable solutions of an iterated functional equation},  Aequationes Math., {\bf 61} (2001), 79-96.

\bibitem{Si-Wang}
J. Si, X. Wang,  {\it Differentiable solutions of a polynomial-like iterative equation with variable coefficients}, 
Publ. Math. Debrecen, {\bf 58} (2001), 57-72.

\bibitem{Si-Zhang}
J. Si, W. Zhang,  {\it $C^2$ solutions of a functional equations}, 
 Acta Math. Sinica, {\bf 41} (1998), 1061--1064.

\bibitem{Xu-Zhang}
B. Xu, W. Zhang, {\it Construction of continuous solutions and stability
for the polynomial-like iterative equation},  J. Math. Anal. Appl., {\bf 325} (2007), 1160-1170.

\bibitem{Bing-Weinian}
B. Xu,  and W. Zhang, {\it Decreasing solutions and convex solutions of the
polynomial-like iterative equation},  J. Math. Anal. Appl., {\bf 329} (2007), 483-497.

\bibitem{Zdun-Zhang}
M. C. Zdun and W. Zhang, {\it A general class of iterative equations
on the unit circle},  Czechoslovak Mathematical Journal, {\bf 57} (132) (2007), 809-829.



\bibitem{zhang1989}
W. Zhang, {\it On the differentiable solutions of the iterated equation
{$\sum^n_{i=1}\lambda_if^i(x)=F(x)$}}, 
 Acta Math. Sinica, {\bf 32} (1989), 98--109, in Chinese.

\bibitem{zhang1990}
W. Zhang, {\it Discussion on the differentiable solutions of the iterated equation  $\sum_{i=1}^{n}\lambda_if^i(x)=F(x)$}, 
 Nonlinear Anal., {\bf 15} (1990), 387-398.



\bibitem{Zhang-Edinb}
W. Zhang,
{\it Solutions of equivariance for a polynomial-like iterative equation},   Proc. Roy. Soc. Edinb. A, {\bf 130} (2000), No.5, 1153-1163.





	
	
	
	
\end{thebibliography}

\end{document}